\newtheorem{teo}{Theorem}[]
\newtheorem{lem}[teo]{Lemma}
\newtheorem{pro}[teo]{Proposition}
\newtheorem{obss}[teo]{Remark}
\theoremstyle{remark}
\newtheorem*{ack}{Acknowledgements}
\newcommand{\Esf}{\mathbb{S}}
\newcommand{\N}{\mathbb{N}}
\newcommand{\R}{\mathbb{R}}
\newcommand{\B}{\mathbb{B}}
\newcommand{\G}{\mathcal{G}}
\newcommand{\Zoo}{\mathbf{Z}}
\newcommand{\Zo}{\mathcal{Z}}
\newcommand{\ZZo}{\tilde{\Zoo}}
\newcommand{\Mr}{\mathfrak{M}}
\newcommand{\hr}{\mathfrak{h}}
\newcommand{\Hr}{\mathcal{H}}
\newcommand{\Hrb}{\mathbf{H}}
\newcommand{\Hrt}{\tilde{\Hrb}}
\newcommand{\gr}{\texttt{g}}
\newcommand{\s}{^{*}}
\newcommand{\sfo}{_{*}}
\newcommand{\ud}{\, d}
\newcommand{\om}{\omega}
\newcommand{\de}{\delta}
\newcommand{\e}{\varepsilon}
\newcommand{\vf}{\varphi}
\newcommand{\la}{\lambda}
\newcommand{\al}{\alpha}
\newcommand{\si}{\sigma}
\newcommand{\ga}{\gamma}
\newcommand{\dga}{\dot{\gamma}}
\newcommand{\Ddd}{\mathscr{D}}
\newcommand{\M}{\mathcal{M}}
\DeclareMathOperator{\id}{Id}
\DeclareMathOperator*{\esssup}{ess\,sup}
\DeclareMathOperator*{\essinf}{ess\,inf}
\DeclareMathOperator{\spt}{spt}
\title[Equivariant Smoothing Processes]{Equivariant Smoothing Processes on Currents and Spaces with Bounded Curvature}
\author[A.~Ahumada G\'omez]{Andr\'es Ahumada G\'omez}
\address[Ahumada G\'omez]{Facultad de Ciencias, Universidad Nacional Aut\'onoma de M\'exico, M\'exico}
\email{andres.ahumada.gomez@ciencias.unam.mx}
\thanks{Supported by CONACYT Doctoral Scholarship No. 734952}
\date{\today}
\subjclass[2020]{53C23,53C20,53C10}
\keywords{Spaces with bounded curvature, Alexandrov spaces, CAT spaces, current, Lie group, action of Lie groups, symmetry, smoothing process, approximation}
\begin{document}

\maketitle


\begin{abstract}
    We introduce actions of a compact Lie group in two regularization processes: in De Rham's approximation process of currents on a smooth manifold by smooth currents, and in a smoothing operator of Riemannian metrics of metric spaces with bounded curvature. 
\end{abstract}

\section{Introduction}

Riemannian geometry creates an ideal framework to study the topology of manifolds using all the differentiable, metric and algebraic machinery. Within this environment, Toponogov proved that having a lower bound on the sectional curvature is equivalent to a triangle comparison process (see Theorem 12.2.2, ~\cite{petersen2016}). This way of thinking has been generalized to metric spaces with a notion of curvature beginning in the 1950's with the work by A. D. Aleksandrov in \cite{aleksandrov1957}. Such notion of curvature defines when a metric space has a curvature bounded either from below or from above in terms of comparison of geodesic triangles.

It is natural to study the case where we have both curvature bounds, namely \emph{spaces with bounded curvature}, a combination of Alexandrov spaces (lower bound) and CAT spaces (upper bound). In 1975, V. N. Berestovskii proved in \cite{berestovski1976} that having both bounds is very restrictive and these spaces are actually Riemannian manifolds with low regularity. 

In 1983, I. G. Nikolaev improved the regularity by giving a $C^{3,\al}$ differentiable structure and a $C^{1,\al}$ Riemannian tensor to the spaces with bounded curvature in \cite{nikolaev1983} and \cite{nikolaev19832}.

Finally, in 1991 Nikolaev proved in \cite{nikolaev1991} that we can approximate a space with bounded curvature by smooth Riemannian manifolds controlling the sectional curvature:

  \begin{teo}[Nikolaev, 1991]\label{teoa}
   	Let $(\M,d(\textbf{\gr}_{0}))$ be a space with bounded curvature with $d$ its metric and $\textbf{\gr}_{0}$ the induced Riemannian metric by this. Then, on the differentiable manifold $\M$, one can define a sequence of infinitely differentiable Riemannian metrics $\{\textbf{\gr}_{m}\}_{m=1}^{\infty}$ having the following properties:
	\begin{enumerate}
		\item The metric spaces $(\M,d(\textbf{\gr}_{m}))$ converge in the Lipschitz sense to the metric space $(\M,d(\textbf{\gr}_{0}))$.
		\item The following estimates hold for the limits of curvature:
		\begin{equation*}
			\limsup_{m\to\infty}\bar{k}_{m}(\M)\leq\bar{k}_{0}(\M)\text{ and }\liminf_{m\to\infty}\underaccent{\bar}{k}_{m}(\M)\geq\underaccent{\bar}{k}_{0}(\M),
		\end{equation*}
	where $\bar{k}_{l}(\M)$ and $\underaccent{\bar}{k}_{l}(\M)$ denote the upper and lower limits of sectional curvature of the spaces $(\M,\textbf{\gr}_{l})$, $l\in\N$. 
	\end{enumerate}
  \end{teo}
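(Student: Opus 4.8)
The plan is to obtain the metrics $\gr_m$ as the images of $\gr_0$ under a geometric smoothing (averaging) operator applied at scales $\rho_m\to 0$. By the results of Berestovskii and of Nikolaev recalled above, $\M$ carries a $C^{3,\al}$ differentiable structure in which the induced metric $\gr_0$ is of class $C^{1,\al}$. First I would fix an atlas adapted to this structure --- harmonic coordinate charts, in which the components $(\gr_0)_{ij}$ have uniformly controlled $C^{1,\al}$ norms and in which the (weak) curvature of $\gr_0$ admits a manageable expression. For each small $\rho>0$ define $S_\rho\gr_0$ chartwise by convolving the $(\gr_0)_{ij}$ with a fixed mollifier rescaled to width $\rho$, and glue the local averages with a subordinate partition of unity. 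For $\rho$ small, $S_\rho\gr_0$ is a positive-definite $C^\infty$ symmetric $2$-tensor, i.e., a smooth Riemannian metric on $\M$; set $\gr_m:=S_{\rho_m}\gr_0$, with the sequence $\rho_m\downarrow 0$ to be fixed at the end by a diagonal argument.

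Item (1) is the routine part. Classical mollification estimates give $S_\rho\gr_0\to\gr_0$ in $C^{1,\al'}$ for every $\al'<\al$ as $\rho\to 0$; in particular the metric components converge uniformly on compact sets, so on each tangent space the ratio of the $S_\rho\gr_0$-norm to the $\gr_0$-norm tends to $1$ uniformly. Hence the identity map $(\M,d(\gr_m))\to(\M,d(\gr_0))$ is bi-Lipschitz with dilatation tending to $1$, which is exactly convergence of the associated metric spaces in the Lipschitz sense.

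Item (2) is the substantive part. Sectional curvature is a nonlinear second-order differential operator in the metric, and since $\gr_0$ is only $C^{1,\al}$ its curvature exists merely weakly, so one cannot simply commute the averaging with ``$K$'' and let $\rho\to 0$. The mechanism I would use is the comparison characterization of the two-sided bound: for a $C^{1,\al}$ metric the $\CAT$ and the Alexandrov conditions for $(\M,d(\gr_0))$ are equivalent to one-sided inequalities $\underaccent{\bar}{k}_0\ddi K\ddi\bar{k}_0$ for the generalized curvature of $\gr_0$ read off in the adapted charts. One then writes the classical sectional curvature of $S_\rho\gr_0$ at a point as a $\rho$-localized weighted average of the generalized curvature of $\gr_0$, plus a remainder built from products of first derivatives of $\gr_0$ minus their local averages. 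The remainder is estimated by the $C^{1,\al}$ modulus of continuity of $\gr_0$ and hence tends to $0$ with $\rho$, uniformly on compact sets, while the averaging preserves the one-sided inequalities. Therefore $K_{\gr_m}\ddi\bar{k}_0+o(1)$ and $K_{\gr_m}\ddd\underaccent{\bar}{k}_0-o(1)$, which gives the asserted $\limsup$ and $\liminf$ inequalities; a final diagonal choice of $\rho_m$ makes all the preceding estimates hold simultaneously.

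The main obstacle is precisely this control of a nonlinear second-order quantity attached to a metric that a priori has no second derivatives. Everything hinges on (i) the structure theory for spaces with bounded curvature, which supplies coordinates in which $\gr_0$ is $C^{1,\al}$ with controlled norms and in which its weak curvature is governed by the two-sided bound, and (ii) a sharp estimate for the commutator of mollification with the curvature operator, showing the spurious curvature created by smoothing is of lower order in $\rho$. Obtaining the inequalities in (2) with the correct one-sided directions --- rather than a mere bound $\ai K_{\gr_m}\ad\ddi C$ --- is the crux, and is where Nikolaev's earlier regularity results do the heavy lifting.
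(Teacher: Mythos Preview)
Your outline has the right architecture---local mollification plus a commutator estimate for the nonlinear part of the curvature---but two of the steps do not go through as written, and both are places where the paper (following Nikolaev, who in turn follows De~Rham) proceeds differently. First, you work only with the $C^{1,\al}$ regularity of $\gr_0$ and speak of its curvature as existing ``merely weakly''. The mechanism you propose for item~(2), however, needs the sectional curvature of $\gr_0$ to exist pointwise a.e.\ and to satisfy $\underaccent{\bar}{k}_0\ddi K_\si\ddi\bar{k}_0$ a.e.; otherwise ``averaging preserves one-sided inequalities'' has no content. This input is available, but it is the $W^{2,p}$ regularity of $\gr_0$ (for all $p<\infty$), not the $C^{1,\al}$ statement you cite. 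The paper works throughout in $\Mr^{2,p}_{\hr}(\M)$; the key local estimate is the $\Mr^{2,p}$ estimate of Lemma~\ref{lemma3.1}, and the a.e.\ curvature bounds for $\gr_0$ are exactly Nikolaev's Theorem~2.1, invoked at the end of the argument.

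Second, you mollify componentwise in charts and then ``glue the local averages with a subordinate partition of unity''. Curvature bounds are not stable under convex combinations of metrics: the curvature of $\psi\gr_1+(1-\psi)\gr_2$ picks up terms in $\pa^2\psi$ and $\pa\psi\cdot\pa(\gr_1-\gr_2)$, and there is no reason these respect the one-sided bounds in the overlap regions. The paper avoids any gluing by the De~Rham device: the local smoothing is
\[
\Hrt_\e(\gr)=\int_{\R^n} f_\e(y)\,(s_y)\s\gr\ud y,\qquad s_y=h\circ\tau_y\circ h^{-1},
\]
with $s_y$ a diffeomorphism of $\R^n$ equal to the identity outside $\B^n$. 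Hence $\Hrt_\e(\gr)=\gr$ outside the chart, and one simply \emph{composes} the chartwise operators, $\Hr=\lim_s\Hr_s\circ\cdots\circ\Hr_1$, with no partition of unity at all. The curvature control then reduces to the single-chart Lemma~\ref{lemma3.1}, proved from $|s_{y\e}-\id|_{C^3}\to 0$ together with the $\Mr^{2,p}$ norm; your commutator idea for the quadratic first-derivative terms is essentially what happens inside that lemma, but it sits on top of the $W^{2,p}$ framework rather than in place of it.
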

  
    The proof of Theorem \ref{teoa} is an emulation of the proof of the following approximation procedure of currents by De Rham in \cite{derham1984} without the curvature control.

  \begin{teo}[De Rham's Approximation Theorem]\label{teoderham}
      On a manifold $M$, we can construct a linear operator $\Zo$ on the topological dual space of the space of compactly supported forms, depending on positive parameters  $\e_{1},\e_{2},\ldots$ which are finite or infinite in number according as $M$ is compact or not respectively, which have the following properties:
	\begin{enumerate}[(a)]
		\item If $T$ is an $m$-dimensional current in $M$, then $\Zo T$ is an $m$-dimensional current. 
		\item The support of $\Zo T$ is contained in any given neighborhood of the support of $T$ provided that the parameters $\e_{i}$ are sufficiently small.
		\item $\Zo T$ is $C^{\infty}$.	
		\item If each $\e_{i}$ tends to zero, $\Zo T $ converges weakly to $T$.
	\end{enumerate}
  \end{teo}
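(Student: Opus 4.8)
The plan is to follow De~Rham's original argument: build a local smoothing operator on Euclidean space by convolution together with its chain homotopy, and then patch these into an infinite — but locally finite — composition over a suitable atlas of $M$.

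\emph{Step 1: local regularization on $\R^{n}$.} Fix a nonnegative mollifier $\rho\in C^{\infty}$ with $\int\rho=1$, supported in the unit ball of $\R^{n}$, and set $\rho_{\e}(x)=\e^{-n}\rho(x/\e)$. For a current $T$ on an open set $\Om\subseteq\R^{n}$, let $R_{\e}T$ be the current obtained by averaging the push-forwards of $T$ under the translations $x\mapsto x-y$ against the weight $\rho_{\e}(y)\ud y$ (equivalently, componentwise convolution with $\rho_{\e}$ of the distributional form representing $T$). The standard facts are: $R_{\e}T$ is $C^{\infty}$; $\spt R_{\e}T$ lies in the $\e$-neighborhood of $\spt T$; $R_{\e}T\to T$ weakly as $\e\to 0$; and there is a linear operator $A_{\e}$ of the same translation-averaging type, with $\spt A_{\e}T$ again in the $\e$-neighborhood of $\spt T$, satisfying the homotopy identity $R_{\e}-\id = bA_{\e}+A_{\e}b$ (with $b$ the boundary of currents) and $A_{\e}T\to 0$ weakly. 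Producing $A_{\e}$ and checking this identity is the one genuinely computational ingredient, and it is classical.

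\emph{Step 2: globalization.} Choose a countable, locally finite atlas $\{(U_{i},\phi_{i})\}_{i}$ with each $\phi_{i}$ a diffeomorphism of $U_{i}$ onto $\R^{n}$, together with compact sets $K_{i}\subset U_{i}$ whose interiors still cover $M$ (finitely many charts, hence finitely many parameters, when $M$ is compact). Pick $h_{i}\in C^{\infty}$ supported in $U_{i}$ with $h_{i}\equiv 1$ on a neighborhood of $K_{i}$. Transport $A_{\e_{i}}$ to $U_{i}$ via $\phi_{i}$, insert the cutoff $h_{i}$ into it, extend by $0$ to obtain a globally defined $A_{i}$, and set
\[
S_{i} \;=\; \id + bA_{i} + A_{i}b .
\]
Then $S_{i}$ alters a current only inside $\spt h_{i}\subset U_{i}$ (and, once $\e_{i}$ is small, only inside an arbitrarily small neighborhood of $\spt h_{i}$); one checks that $S_{i}T$ coincides with the mollified current $R_{\e_{i}}T$ on the open set where $h_{i}\equiv 1$, so $S_{i}T$ is $C^{\infty}$ near $K_{i}$; and $S_{i}\to\id$, $A_{i}\to 0$ as $\e_{i}\to 0$. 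Finally define
\[
\Zo \;=\; \lim_{N\to\infty} S_{N}S_{N-1}\cdots S_{1},
\]
which is well defined because local finiteness of $\{U_{i}\}$ forces all but finitely many $S_{i}$ to act as the identity on any relatively compact open set, so the infinite product stabilizes locally.

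\emph{Step 3: the four properties, and the main obstacle.} Property (a) is immediate, since each $S_{i}$ preserves the dimension of a current. For (b), each $S_{i}$ enlarges supports by at most an $\e_{i}$-neighborhood inside $U_{i}$; choosing $\e_{1},\e_{2},\dots$ successively small (each depending on the previous choices and on the local geometry) keeps the cumulative enlargement inside any prescribed neighborhood of $\spt T$, and local finiteness means only finitely many enlargements are felt near any point. For (c), given $p\in M$ pick $i_{0}$ with $p\in\mathrm{int}\,K_{i_{0}}$; after $S_{i_{0}}$ is applied the current is $C^{\infty}$ on a neighborhood of $p$, and every later $S_{j}$ either acts as the identity near $p$ or replaces the current there by a convolution of it, hence preserves smoothness on a (possibly smaller) neighborhood of $p$; only finitely many indices matter near $p$, so $\Zo T$ is $C^{\infty}$ near $p$. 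For (d), a compactly supported test form $\vf$ pairs with $\Zo T$ through only the finitely many $S_{i}$ whose charts meet a fixed neighborhood of $\spt\vf$; as the relevant $\e_{i}\to 0$ each such $S_{i}\to\id$, so the finite composition does too and $\langle\Zo T,\vf\rangle\to\langle T,\vf\rangle$. The main obstacle is Step~2 together with (c): one must arrange the chart data $\{U_{i}\}$, $\{K_{i}\}$, $\{h_{i}\}$ and the cutoff of the homotopy operator so that simultaneously (i) each $S_{i}$ is an honest chain homotopy equal to the identity outside its chart, (ii) every point of $M$ is \emph{fully} regularized by some $S_{i_{0}}$, and (iii) this regularity survives all subsequent $S_{j}$ — all while keeping the infinite product $\prod_{i}S_{i}$ well defined. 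The Euclidean identity $R_{\e}-\id=bA_{\e}+A_{\e}b$ of Step~1 is the technical engine; the bookkeeping of the locally finite gluing is where the care is required.
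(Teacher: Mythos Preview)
The paper does not prove this theorem itself; it is cited from De~Rham, with only the local operators $\Zoo$ and $\ZZo$ (Propositions~\ref{prosmooth} and~\ref{pro2}) recorded and the composition $\Zo=\lim_{l}\Zo_{l}\circ\cdots\circ\Zo_{1}$ invoked later in the equivariant proof. Your proposal is correct and is exactly De~Rham's strategy of local Euclidean mollification patched over a locally finite atlas, so at the level of architecture it matches what the paper is quoting.

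The one real difference is the localization device. The paper, following De~Rham, conjugates the translations $\tau_{y}$ through a diffeomorphism $h\colon\R^{n}\to\B^{n}$ to obtain the maps $s_{y}$ of~(\ref{aplis}), which are the identity off $\bar\B^{n}$; the resulting $\ZZo$ is then intrinsically equal to the identity outside the chart, and the chart operator is built by splitting $T=h_{i}T+T''$ and applying $\ZZo$ only to the first piece. You instead keep the straight translations, construct the chain homotopy $A_{\e}$ on $\R^{n}$, multiply \emph{that} by the cutoff, and set $S_{i}=\id+bA_{i}+A_{i}b$. Both tricks produce an operator equal to the identity off the chart and to the genuine mollification on $\{h_{i}\equiv1\}$, hence both yield (a)--(d). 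The $s_{y}$ device spares you the Leibniz check for $b(h_{i}\cdot-)$ and is the version reused verbatim in the paper's equivariant and metric-smoothing constructions; your packaging has the side benefit that the global chain homotopy $\Zo-\id=bA+Ab$ drops out automatically, a conclusion De~Rham also establishes but the paper's stated form of the theorem omits.
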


The goal of this paper is to introduce symmetries in processes described in Theorem \ref{teoa} and Theorem \ref{teoderham} considering that the isometry group of a space with bounded curvature is a Lie group (see \cite{myers_steenrod1939, fukaya_yamaguchi1994}) by proving the following two theorems.

\begin{teo}[Equivariant De Rham's Approximation Theorem]\label{tedr}
    Let $\G$ be a compact Lie group acting on a manifold $\M$ and let $T$ be an $m$-current on $\M$. If $T$ is $\G$-invariant, then we can construct a linear operator $\Zo_{\G}$ that depends on a number $\e>0$ such that $\Zo_{\G}T$ has the following properties:
	\begin{enumerate}[(a)]
		\item $\Zo_{\G}T$ is an $m$-current of class $C^{\infty}$.
		\item $\Zo_{\G}T$ is a $\G$-invariant current.
		\item If $\e$ tends to zero, $\Zo_{\G}T$ converges weakly to $T$.
	\end{enumerate}
\end{teo}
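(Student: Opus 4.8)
The plan is to build $\Zo_\G$ by averaging De Rham's operator over the group. Fix the normalized bi-invariant Haar measure $d\mu$ on the compact group $\G$, write $L_g\colon\M\to\M$ for the diffeomorphism determined by $g\in\G$ (so that $\G$ acts on currents by the pushforwards $(L_g)\sfo$ and on forms by the pullbacks $L_g\s$), and let $\Zo_\e$ be the operator of Theorem \ref{teoderham} with its parameters all taken equal to a single $\e>0$. For a $\G$-invariant current $T$ I would set
\[
	\Zo_\G T \;:=\; \int_\G (L_g)\sfo\big(\Zo_\e\, T\big)\, d\mu(g),
\]
which is the value at $T$ of the genuinely $\G$-equivariant operator $\int_\G (L_g)\sfo\circ\Zo_\e\circ(L_{g^{-1}})\sfo\, d\mu(g)$ once one uses $(L_{g^{-1}})\sfo T=T$. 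The three properties are then checked one by one.

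For (a): a smooth current on the $n$-manifold $\M$ is the same thing as a smooth differential form of complementary degree, $(L_g)\sfo$ carries smooth currents to smooth currents, and $g\mapsto(L_g)\sfo(\Zo_\e T)$ is a smooth family of such forms parametrized by the compact manifold $\G$ (the action map $\G\times\M\to\M$ being smooth); integrating this family against $d\mu$ yields a smooth current. For (b): for every $h\in\G$ one has, using $(L_h)\sfo(L_g)\sfo=(L_{hg})\sfo$ and the invariance of $d\mu$ under $g\mapsto hg$,
\[
	(L_h)\sfo\,\Zo_\G T=\int_\G (L_{hg})\sfo\big(\Zo_\e T\big)\, d\mu(g)=\int_\G (L_{g})\sfo\big(\Zo_\e T\big)\, d\mu(g)=\Zo_\G T,
\]
so $\Zo_\G T$ is $\G$-invariant.

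For (c), fix a compactly supported test $m$-form $\vf$. Since $\G$ is compact, all the forms $L_g\s\vf$ are supported in the common compact set $\G\cdot\spt\vf$ and $\{L_g\s\vf:g\in\G\}$ is a compact family of test forms. Then
\[
	\langle \Zo_\G T,\vf\rangle=\int_\G\langle \Zo_\e T,\ L_g\s\vf\rangle\, d\mu(g),
\]
and I would pass to the limit $\e\to 0$ under the integral using that De Rham's weak convergence $\Zo_\e T\to T$ is uniform over compact families of test forms (equivalently, the smoothing operators on forms dual to $\Zo_\e$ tend to the identity in the $C^\infty$ topology, uniformly on compact sets, by equicontinuity). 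This yields $\langle\Zo_\G T,\vf\rangle\to\int_\G\langle T,L_g\s\vf\rangle\,d\mu(g)=\int_\G\langle (L_g)\sfo T,\vf\rangle\,d\mu(g)=\langle T,\vf\rangle$, where the last two equalities use the $\G$-invariance of $T$ and that $d\mu$ is a probability measure.

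The step I expect to be the main obstacle is precisely this interchange of the weak limit with the Haar integral in (c): De Rham's theorem only asserts convergence for a fixed test form, so one has to upgrade it to uniform convergence over the family $\{L_g\s\vf\}$, and it is exactly here that the compactness of $\G$ is indispensable, since it confines all the relevant supports to one compact set and makes the family compact, so that a Banach--Steinhaus / equicontinuity argument applies. Once this is settled, properties (a) and (b) follow formally from the fact that one is averaging a smooth, $\G$-equivariant family over a compact group.
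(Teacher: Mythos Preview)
Your argument is correct and takes a genuinely different, more economical route than the paper. You treat De Rham's global operator $\Zo_\e$ as a black box and average its output over $\G$; the paper instead rebuilds the smoothing from scratch in a $\G$-adapted way: it covers $\M$ by tubular neighborhoods $\bigcup_{g\in\G} gV_i$ of orbits, defines the local De Rham smoothing $\Zo|_{V_i}$ on each generating chart, transports it to $gV_i$ via the action, Haar-averages to obtain a tube-wise operator $\Zo^\G_i$, and finally composes the $\Zo^\G_i$ exactly as in De Rham's original construction. Your verification of (a) and (b) is cleaner, and the point you single out for (c) --- upgrading weak convergence to uniformity over the compact family $\{L_g\s\vf:g\in\G\}$ via the transpose of $\Zo_\e$ on forms --- is precisely the right place to spend the compactness of $\G$, and it goes through because the dual smoothing on forms does tend to the identity uniformly on compact sets of test forms. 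What the paper's heavier construction buys is not needed for Theorem~\ref{tedr} itself but for its sequel: the same tubular cover and the same local-average-then-compose scheme are reused verbatim in the proof of Theorem~\ref{ENATeo}, where chart-by-chart control of the smoothing is what feeds into the curvature estimates of Lemmas~\ref{lemma3.1} and~\ref{lemma3.2}. Your global-averaging operator would prove Theorem~\ref{tedr} but would not plug directly into that later argument.
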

\begin{teo}[Equivariant Nikolaev's Approximation Theorem]\label{ENATeo}
    	Let $(\M,d(\textbf{\gr}_{0}))$ be a compact space with bounded curvature and a compact Lie group $\G$ acting on $\M$ by isometries. Then, on the differentiable manifold $\M$, one can define a sequence of infinitely differentiable Riemannian metrics $\{\textbf{\gr}_{k}\}_{k=1}^{\infty}$ having the following properties:
	\begin{enumerate}
		\item The Lie group $\G$ acts on $(\M,d(\textbf{\gr}_{k}))$ by isometries.
		\item The metric spaces $(\M,d(\textbf{\gr}_{k}))$ converge in the Lipschitz sense to the metric space $(\M,d(\textbf{\gr}_{0}))$.
		\item The following estimates hold for the limits of the curvature:
		\begin{equation*}
			\limsup_{k\to\infty}\bar{k}_{k}(\M)\leq\bar{k}_{0}(\M)\text{ and }\liminf_{k\to\infty}\underaccent{\bar}{k}_{k}(\M)\geq\underaccent{\bar}{k}_{0}(\M),
		\end{equation*}
		where $\bar{k}_{r}(\M)$ and $\underaccent{\bar}{k}_{r}(\M)$ denote the upper and lower limits of curvature of the spaces $(\M,\textbf{\gr}_{r})$, $r\in\N$.
	\end{enumerate}
\end{teo}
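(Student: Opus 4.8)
The plan is to run Nikolaev's construction from Theorem \ref{teoa} through an averaging over $\G$, in the same way the equivariant operator $\Zo_{\G}$ of Theorem \ref{tedr} is built from De Rham's operator $\Zo$. The key preliminary observation is that the hypotheses already force $\gr_{0}$ to be $\G$-invariant: on a space with bounded curvature the Riemannian metric is the one induced by its length metric, so by the regularity theory for these spaces (see \cite{MR1503467}) every $\phi\in\G$ is a $C^{3,\al}$-diffeomorphism of $\M$ with $\phi\s\gr_{0}=\gr_{0}$, and in particular $\phi$ carries $\gr_{0}$-harmonic coordinates to $\gr_{0}$-harmonic coordinates. Let $S_{\e}$ be the regularization operator on Riemannian metrics constructed in the proof of Theorem \ref{teoa}, so that $S_{\e}\gr_{0}$ is smooth, converges to $\gr_{0}$ in $C^{0}$ (indeed in $C^{1,\be}$, $\be<\al$) as $\e\to 0$, and satisfies the curvature estimates. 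I would set
\[
S_{\e}^{\G}:=\int_{\G}\phi\s\circ S_{\e}\circ(\phi^{-1})\s\ud\mu(\phi),
\]
with $\mu$ the normalized Haar measure of $\G$, and define $\gr_{k}:=S_{\e_{k}}^{\G}\gr_{0}$ for a sequence $\e_{k}\downarrow 0$; since $\M$ is compact the construction depends on a single parameter $\e_{k}$ at each stage, as in Theorem \ref{tedr}.

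Properties (1) and (2) are the routine part. A change of variables in the Haar integral, using its bi-invariance exactly as for $\Zo_{\G}$, shows that $S_{\e}^{\G}$ is $\G$-equivariant, $S_{\e}^{\G}\circ\psi\s=\psi\s\circ S_{\e}^{\G}$; combined with $\psi\s\gr_{0}=\gr_{0}$ this gives $\psi\s\gr_{k}=\gr_{k}$ for every $\psi\in\G$. Differentiating under the integral over the compact group shows $\gr_{k}$ is $C^{\infty}$, and $\gr_{k}$ is positive definite for $k$ large since it is $C^{0}$-close to $\gr_{0}$; thus $\gr_{k}$ is a genuine Riemannian metric and $\G$ acts on $(\M,d(\gr_{k}))$ by isometries, which is (1). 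For (2), the identity $\phi\s\gr_{0}=\gr_{0}$ reduces the formula to $\gr_{k}=\int_{\G}\phi\s(S_{\e_{k}}\gr_{0})\ud\mu(\phi)$; since $S_{\e_{k}}\gr_{0}\to\gr_{0}$ in $C^{0}$ and $\phi\mapsto\phi\s$ is uniformly continuous on the compact group $\G$, this average converges, $\gr_{k}\to\gr_{0}$ in $C^{0}$, and uniform $C^{0}$-convergence of positive definite metric tensors on a compact manifold is precisely Lipschitz convergence of the associated metric spaces (realized by the identity maps, whose bi-Lipschitz constants tend to $1$).

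The curvature estimates in (3) are the main obstacle, because the sectional curvature is a nonlinear second-order expression in the metric, so the curvature of the average $\gr_{k}$ is not the average of the curvatures of the metrics $\phi\s(S_{\e_{k}}\gr_{0})$ and no elementary comparison is available. What unblocks the argument is that, since every $\phi\in\G$ fixes $\gr_{0}$ and preserves $\gr_{0}$-harmonic coordinates, the metric $\gr_{k}=\int_{\G}\phi\s(S_{\e_{k}}\gr_{0})\ud\mu(\phi)$ is \emph{itself} a Nikolaev-type regularization of $\gr_{0}$ of scale $\e_{k}$: each $\phi\s(S_{\e_{k}}\gr_{0})=(\phi\s\circ S_{\e_{k}}\circ(\phi^{-1})\s)\gr_{0}$ is an $\e_{k}$-scale mollification of $\gr_{0}$ in $\gr_{0}$-harmonic charts, carried out with the $\phi$-transported cover, partition of unity and kernels, and the $\G$-average merely merges all of these into one regularization scheme at the same scale and with the same uniform bounds. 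Nikolaev's derivation of the curvature estimates in Theorem \ref{teoa} uses only such structural features of the regularization (mollification at scale $\e$ in harmonic charts, glued by a partition of unity with controlled derivatives), so the plan is to go through that derivation and verify that it carries over to $\gr_{k}=S_{\e_{k}}^{\G}\gr_{0}$, which yields $\limsup_{k\to\infty}\bar{k}_{k}(\M)\le\bar{k}_{0}(\M)$ and $\liminf_{k\to\infty}\underaccent{\bar}{k}_{k}(\M)\ge\underaccent{\bar}{k}_{0}(\M)$ and finishes the proof.
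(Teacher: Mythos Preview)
Your approach is the same underlying idea as the paper's---make Nikolaev's smoothing $\G$-invariant by Haar averaging---but with a different order of operations. You run the full global operator $S_\e$ once and then average, setting $\gr_k=\int_\G\phi\s(S_{\e_k}\gr_0)\,d\mu(\phi)$; the paper instead builds a $\G$-adapted cover by tubular neighbourhoods of orbits, averages the \emph{local} chart-level smoothing to obtain a tube operator $\Hr^\G_i=\ai\G\ad^{-1}\int_\G(\al^g)\s\Hr_{e,i}(\gr_0)\,dg$, and then composes these, $\Hr^\G=\lim_s\Hr^\G_s\circ\cdots\circ\Hr^\G_1$. The paper's interleaving keeps every intermediate metric $\G$-invariant and ties the averaging directly to the local De~Rham-type mollifier $\Hrt_\e$ for which Nikolaev's curvature lemma (Lemma~\ref{lemma3.1}) is stated; it also yields a pointwise comparison with the unaveraged operator (inequality~(\ref{gequ})) that plugs straight into the adapted Lemma~\ref{lemma3.2}. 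Your version is conceptually cleaner and treats $S_\e$ as a black box; the price is that all the analytic work lands in your final paragraph.

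Two remarks on that paragraph. First, a minor slip: the operator used here is the De~Rham convolution $\Hrt_\e(\gr)=\int f_\e(y)\,s_y\s\gr\,dy$ in arbitrary charts, not mollification in harmonic coordinates; all you actually use is $\phi\s\gr_0=\gr_0$, and the harmonic-coordinate remark can be dropped. Second, the assertion that the $\G$-average ``merges all of these into one regularization scheme'' so that Nikolaev's derivation ``carries over'' is a plan, not a proof: the Haar integral of the $\phi$-transported Nikolaev operators is not literally a single composed operator of the form $\Hr_s\circ\cdots\circ\Hr_1$, so you cannot simply quote the original argument. What makes it go through is a uniform-in-$\phi$ bound $\left|\phi\s(S_{\e_k}\gr_0)-\gr_0\right|_{\Mr^{2,p}_\hr(\M),\nu}\le a_\nu/k$ (obtained from compactness of $\G$ and equicontinuity of the action, in the spirit of the paper's inequality~(\ref{gequ})), after which Nikolaev's curvature argument can be rerun for the averaged metric. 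The paper's own Lemma~\ref{lemma3.2} is sketched at a comparable level of detail, so your proposal is on sound footing; just be aware that this step is where the genuine work sits.
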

We notice that we called ``equivariant'' to Theorem \ref{tedr} and Theorem \ref{ENATeo} because the process defines an action in the limit using the action on the elements of the sequence, not because we have an equivariant map in their statements.

The main application of Theorem \ref{ENATeo} is that it provides an algorithmic procedure to extend results on actions of compact Lie groups on compact Riemannian manifolds to actions of compact Lie groups on spaces with bounded curvature. The first application of this procedure is to the equivariant sphere theorem.

\begin{teo}[Equivariant Sphere Theorem]\label{teoesf}
	Let $(\M,d)$ be a compact, simply connected space with bounded curvature of dimension $n\geq 4$ such that 
	\begin{equation}\label{eqesf}
		\frac{1}{4}<\underaccent{\bar}{K}(\M)\leq \bar{K}(\M)\leq 1.
	\end{equation}
	Assume that $\G$ is a compact Lie group and that there exists a group homomorphism \mbox{$\theta:\G\to\text{Isom}(\M)$}, i.e. $\G$ acts by isometries on $\M$. Then there exists a group homomorphism $\si:\G\to O(n+1)$ and a diffeomorphism $F:\M\to\Esf^{n}$ which is also equivariant; i.e. $F\circ\theta(g)=\si(g)\circ F$ for all $g\in\G$.
\end{teo}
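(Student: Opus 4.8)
The plan is to deduce the equivariant Sphere Theorem from the classical (non-equivariant) differentiable Sphere Theorem together with Theorem \ref{ENATeo}. The strategy is an averaging/smoothing argument followed by a rigidity argument for the resulting diffeomorphism.

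\medskip

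\emph{Step 1: Produce a $\G$-invariant smooth Riemannian metric on $\M$ that is pinched.} Applying Theorem \ref{ENATeo} to the compact space with bounded curvature $(\M,d)$ and the isometric $\G$-action, we obtain a sequence of smooth $\G$-invariant Riemannian metrics $\gr_{k}$ with $d(\gr_{k})\to d$ in the Lipschitz sense and with the sectional curvature bounds converging to those of $\M$. Since the pinching \eqref{eqesf} is strict, for $k$ large enough the smooth metric $\gr_{k}$ is itself strictly $1/4$-pinched, i.e. there is $\de>0$ with $\tfrac14+\de<K_{\gr_{k}}<1+\de'$; after rescaling we may assume $\tfrac14<K_{\gr_{k}}\ddi 1$. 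Moreover Lipschitz convergence forces the diffeomorphism type to be preserved, so $(\M,\gr_{k})$ is still simply connected of dimension $n\ddd 4$. Thus $(\M,\gr_{k})$ is a smooth, compact, simply connected, strictly $1/4$-pinched Riemannian manifold on which $\G$ acts by isometries via $\rho$.

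\medskip

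\emph{Step 2: Invoke the equivariant differentiable Sphere Theorem in the smooth category.} The statement we need is the equivariant version of the Grove--Shiohama / Brendle--Schoen differentiable Sphere Theorem: a compact simply connected strictly $1/4$-pinched Riemannian manifold on which a compact Lie group acts by isometries is equivariantly diffeomorphic to $\Esf^{n}$ with a linear $O(n+1)$-action. One way to obtain this is to run the Ricci flow, which preserves the isometry group, starting from $\gr_{k}$: by Brendle--Schoen the flow converges after rescaling to a round metric, the isometry group only grows along the flow, and one extracts a $\G$-equivariant diffeomorphism to the round sphere together with the induced homomorphism $\si:\G\to O(n+1)=\mathrm{Isom}(\Esf^{n})$. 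Alternatively one can cite directly the equivariant smooth Sphere Theorem available in the literature. This yields a $\G$-equivariant diffeomorphism $F_{0}\colon(\M,\gr_{k})\to\Esf^{n}$ with $F_{0}\circ\rho(g)=\si(g)\circ F_{0}$.

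\medskip

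\emph{Step 3: Transfer back to $(\M,d)$.} Since $F_{0}$ is already a diffeomorphism of the underlying smooth manifold $\M$ onto $\Esf^{n}$ and the $\G$-action $\rho$ on $\M$ is the same map of the manifold regardless of which metric ($d(\gr_{0})$ or $d(\gr_{k})$) we put on it, the map $F:=F_{0}$ is the desired equivariant diffeomorphism $\M\to\Esf^{n}$, and $\si$ is the desired homomorphism; this proves the theorem.

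\medskip

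\emph{Main obstacle.} The delicate point is Step 2: one must be sure that a genuinely \emph{equivariant} version of the differentiable Sphere Theorem is available, i.e.\ that the diffeomorphism to the round sphere can be chosen to intertwine the given $\G$-action with a linear action. This is where the Ricci flow proof is convenient, because the flow $g(t)$ with $g(0)=\gr_{k}$ is canonical and hence $\rho(g)$ remains isometric for every $g\in\G$ and all $t$; the limiting round metric is then $\G$-invariant, its isometry group is conjugate in $\mathrm{Diff}(\Esf^n)$ to $O(n+1)$, and composing with that conjugation linearizes the action, producing $\si$. A secondary technical point is checking in Step 1 that Lipschitz convergence $d(\gr_{k})\to d$ does preserve simple connectedness and the dimension bound, but this is immediate since Lipschitz-close metrics are bi-Lipschitz homeomorphic and $\M$ is a fixed manifold.
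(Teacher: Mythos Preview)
Your proof is correct and follows essentially the same route as the paper: use Theorem~\ref{ENATeo} to produce a smooth $\G$-invariant strictly quarter-pinched metric on $\M$, rescale, and then apply the smooth equivariant differentiable Sphere Theorem. The only difference is cosmetic---the paper simply cites that last result (Theorem~2 of \cite{zbMATH05859406}) rather than sketching the Ricci flow argument you give in Step~2.
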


Obtained following the same way to think we also have a couple of results about the topology of compact cohomogeneity one spaces with bounded curvature, spaces with bounded curvature that allow an action with orbits of codimension one. The first result is in the same spirit as the Theorem \ref{teoesf}:

\begin{teo}\label{symspa}
    Let $\M$ be an even dimensional compact, simply
connected cohomogeneity one $\G$-space with bounded curvature such that the lower bound is greater than zero. If $\G$ is a compact Lie group, then $\M$ is equivariantly diffeomorphic
to a compact rank one symmetric space.
\end{teo}

The final statement concerns the fundamental group, a Bonnet-Myers Theorem for compact cohomogeneity one spaces with bounded curvature: 
\begin{teo}\label{cohomogteo}
    A compact cohomogeneity one space with bounded curvature such that the lower bound is positive has finite fundamental group.
\end{teo}

The organization of the article is the following. In section 2 we prove the Equivariant De Rham's Approximation Theorem, then in section 3 we prove the Equivariant Nikolaev's Approximation Theorem. Finally, in section 4, we prove the applications such as the Equivariant Sphere Theorem, Theorem \ref{symspa} and Theorem \ref{cohomogteo}.

\begin{ack}
This work is part of the author’s Ph.D. thesis. The author would like to thank his advisor Fernando Galaz Garc\'{\i}a for the support throughout the development of this paper. Also want to thank Mauricio Che, Diego Corro, Jes\'{u}s N\'{u}\~{n}ez Zimbr\'{o}n and Oscar Alfredo Palmas Velasco for their very valuable comments about the first draft of the paper. 
\end{ack}

\section{Currents}
Recall that a $k$-current on a smooth manifold $\M$ is simply a continuous linear functional on the space of compactly supported smooth $k$-forms on $\M$. Currents may be viewed as a natural extension of integration over submanifolds. The basic theory of currents can be found in \cite{derham1984,federer96,morgan16}.

We start studying the case of currents by recalling the construction of two operators on currents due to De Rham (see \cite{derham1984} for a detailed account).

For a point $y$ in $\R^{n}$ we consider the translation $\tau_{y}\colon\R^{n}\to\R^{n}$ given by $\tau_{y}(x)=x+y$ and the homotopy $\tau\colon[0,1]\times\R^{n}\to\R^{n}$ given by $\tau(t,x):=\tau_{ty}(x)=x+ty$.

Now we choose the function $\psi\colon\R\to\R$ given by
\begin{equation*}
	\psi(t)=\left\lbrace \begin{array}{cc}
		\frac{1}{\la_{n}}\exp(\frac{t^{2}}{t^{2}-1}),&0\leq|t|<1,\\
		0,&1\leq |t|,
	\end{array} \right. 
\end{equation*}
where the constant $\la_{n}$ is chosen so that 
\begin{equation*}
	\int_{\R}\psi(t)\ud t=\int_{-1}^{1}\psi(t)\ud t=1.
\end{equation*}

For $\e>0$, we take $f_{\e}\colon\R^{n}\to\R$ given by 
\begin{equation*}
	f_{\e}(x)=\frac{1}{\e^{n}}\psi\left( \frac{\left\| x\right\|}{\e}\right).
\end{equation*}
This is a radial nonnegative $C^{\infty}$ function whose support is contained in $B(0,\e)$, the ball with center $0$ and radius $\e$, and 
\begin{equation*}
	\int_{\R^{n}}f_{\e}(x)\ud x=1.
\end{equation*} 

Putting everything together we can define an operator $\Zoo\colon \Ddd_{m}(\R^{n})\to\Ddd_{m}(\R^{n})$ as follows:
\begin{equation*}
	\Zoo T(\om):=\int_{\R^{n}}f_{\e}(y)\cdot(\tau_{y})\sfo(T)(\om)\ud y.
\end{equation*}

\begin{pro}[Proposition 1, Section 14, Chapter III, of \cite{derham1984}]\label{prosmooth}
	The linear operator $\Zoo$ has the following properties:
	\begin{enumerate}[\quad a)]
		\item If $T$ is an $m$-dimensional current in $\R^{n}$, then $\Zoo T$ is an $m$-dimensional current.
		\item $\Zoo T$ is $C^{\infty}$.
		\item The support of $\Zoo T$ is contained in the $\e$-neighborhood of the support of $T$. 
		\item If $\e$ tends to zero, $\Zoo T$ converges weakly to $T$, i.e., $\Zoo T(\om)\to T(\om)$ for every form $\om\in\Ddd^{m}(\R^{n})$.
	\end{enumerate} 
\end{pro}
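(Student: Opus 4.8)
The plan is to unwind the definition and dispatch the four properties one at a time, the only point requiring real work being the smoothness in (b). Since pushforward of a current is pullback on forms, $(\tau_y)\sfo T(\om)=T(\tau_y\s\om)$, the operator reads
\begin{equation*}
	\Zoo T(\om)=\int_{\R^{n}}f_{\e}(y)\,T(\tau_y\s\om)\ud y,
\end{equation*}
so everything reduces to the behaviour of the map $y\mapsto\tau_y\s\om$ together with the fact that $f_{\e}$ is a nonnegative $C^{\infty}$ bump of total mass $1$ supported in $B(0,\e)$. First one checks the integral even makes sense: $y\mapsto\tau_y\s\om$ is continuous from $\R^{n}$ into $\Ddd^{m}(\R^{n})$ (translation is continuous in every $C^{k}$-seminorm and the supports stay in a fixed compact set for $y$ bounded), hence $y\mapsto T(\tau_y\s\om)$ is continuous, so bounded on the compact support of $f_{\e}$, and the integral is a genuine Lebesgue integral.

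For (a), linearity is immediate and pushforward preserves the dimension of a current, so only continuity of $\Zoo T$ on $\Ddd^{m}(\R^{n})$ remains: if $\om_{j}\to0$ there, all $\om_{j}$ are supported in one compact set $L$, the pulled-back forms $\tau_y\s\om_{j}$ are supported in the fixed compact set $L+\overline{B(0,\e)}$ and converge to $0$ in every $C^{k}$-seminorm uniformly for $y\in\overline{B(0,\e)}$; by continuity of $T$, $T(\tau_y\s\om_{j})\to0$ uniformly in $y$, and integrating against the bounded, compactly supported $f_{\e}$ gives $\Zoo T(\om_{j})\to0$. I would also record here that $(\tau_y)\sfo$ commutes with $\pa$, so $\pa\Zoo T=\Zoo\pa T$. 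Property (c) is a support estimate: if $\spt\om$ is disjoint from the closed $\e$-neighbourhood of $\spt T$, then for $|y|<\e$ the set $\spt(\tau_y\s\om)=\spt\om-y$ is still disjoint from $\spt T$, so $T(\tau_y\s\om)=0$ and hence $\Zoo T(\om)=0$; therefore $\spt\Zoo T$ lies in the closed $\e$-neighbourhood of $\spt T$.

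The heart of the matter is (b). I would pass to coordinates and write $T=\sum_{|I|=m}T_{I}\,e_{I}$, where the $e_{I}$ are the constant $m$-vectors dual to the forms $dx^{I}$ and the coefficients $T_{I}$ are distributions on $\R^{n}$. Because $\tau_y$ is a translation its differential is the identity, so $(\tau_y)\sfo e_{I}=e_{I}$ while $(\tau_y)\sfo T_{I}$ is just the $y$-translate of the distribution $T_{I}$; this yields
\begin{equation*}
	\Zoo T=\sum_{|I|=m}(f_{\e}*T_{I})\,e_{I}.
\end{equation*}
Now one invokes the classical smoothing property of convolution: for a distribution $S$ and $f_{\e}\in C_{c}^{\infty}$ one has $(f_{\e}*S)(x)=\langle S,f_{\e}(x-\cdot)\rangle$, and since $x\mapsto f_{\e}(x-\cdot)$ is a $C^{\infty}$ map into $\Ddd(\R^{n})$ one may differentiate under the pairing to conclude $f_{\e}*S\in C^{\infty}$. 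Hence every coefficient $f_{\e}*T_{I}$ is smooth, i.e.\ $\Zoo T$ is a current of class $C^{\infty}$.

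Finally, for (d), fix $\om\in\Ddd^{m}(\R^{n})$ and set $g(y):=T(\tau_y\s\om)$, a continuous function of $y$ with $g(0)=T(\om)$ by the continuity noted at the start. Since $f_{\e}\ge0$, $\int f_{\e}=1$ and $\spt f_{\e}\subseteq\overline{B(0,\e)}$,
\begin{equation*}
	|\Zoo T(\om)-T(\om)|=\Bigl|\int_{\R^{n}}f_{\e}(y)\bigl(g(y)-g(0)\bigr)\ud y\Bigr|\le\sup_{|y|\le\e}|g(y)-g(0)|,
\end{equation*}
and the right-hand side tends to $0$ as $\e\to0$; this is precisely the asserted weak convergence $\Zoo T\to T$. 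The step that is more than bookkeeping is (b): there the work is to carry out cleanly the reduction to a scalar convolution and then quote the fact that convolving a distribution with a smooth bump gives a smooth function; and throughout (a) and (d) one must keep track of the LF-topology on $\Ddd^{m}(\R^{n})$ so that ``continuous'' and ``converges weakly'' carry their correct meaning.
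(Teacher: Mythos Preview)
Your proof is correct. Note, however, that the paper does not supply its own proof of this proposition: it is quoted verbatim from De Rham's book (Proposition~1, \S14, Chapter~III) and used as a black box, so there is nothing in the paper to compare your argument against. The argument you give---reducing $\Zoo T$ componentwise to the convolution $f_{\e}*T_{I}$ of each distributional coefficient with the mollifier, and then invoking the classical fact that convolution with a $C_{c}^{\infty}$ function yields a $C^{\infty}$ function---is precisely the standard route and is essentially how De Rham proceeds as well. One tiny remark: in identifying $\Zoo T_{I}$ with $f_{\e}*T_{I}$ you are implicitly using that $f_{\e}$ is even (radial), which absorbs the sign in $\tau_{y}^{*}\phi=\phi(\cdot+y)$ versus the usual convolution $\phi(\cdot-y)$; this does not affect the conclusion but is worth making explicit.
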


The next step is to define a smoothing operator analogous to the one in Proposition \ref{prosmooth} in the case of manifolds (Theorem \ref{teoderham}), but we also use it in the equivariant case. We will transform the operator $\Zoo$ defined on $\R^{n}$ with the aid of a homeomorphism $h\colon \R^{n}\to\B^{n}$, where $\B^{n}$ denotes the unit open ball with center in the origin in $\R^{n}$.

Let $g\colon(0,1)\to(0,\infty)$ be a function given by
\begin{equation*}
	g(r)=\left\lbrace \begin{array}{cc}
		r&r\in\left( 0,1/3\right], \\
		\tilde{g}(r)&r\in\left[ 1/3,2/3\right], \\
		\exp\left(\frac{1}{(1-r)^{2}} \right) & r\in\left[ 2/3,1\right),  
	\end{array}\right.
\end{equation*}
where $\tilde{g}\colon[1/3,2/3]\to[1/3,\exp(9)]$ is such that $g$ is $C^{\infty}$ and $g'(r)>0$. This function $g$ is bijective and we denote by $g^{-1}\colon(0,\infty)\to(0,1)$ its inverse. Now we define $h\colon \R^{n}\to\B^{n}$ as follows
\begin{equation*}
	h(x)=\left\lbrace \begin{array}{cc}
		\frac{g^{-1}(\left\| x\right\|)}{\left\| x\right\|}\,x&x\neq 0, \\
		\lim\limits_{x\to\bar{0}} \hspace{7pt}\frac{g^{-1}(\left\| x\right\|)}{\left\| x\right\|}\,x& x=0,  
	\end{array}\right.
\end{equation*}
which is a $C^{\infty}$ diffeomorphism. Then we have a diffeomorphism $s_{y}\colon\R^{n}\to\R^{n}$ of class $C^{\infty}$ given by
\begin{equation}\label{aplis}
	s_{y}(x):=\left\lbrace \begin{array}{cc}
		h\circ\tau_{y}\circ h^{-1}(x)&x\in \B^{n},\\
		x& x\in \R^{n}\smallsetminus\B^{n}.
	\end{array} \right. 
\end{equation}

This function allows us to define an operator $\ZZo\colon \Ddd_{m}(\R^{n})\to \Ddd_{m}(\R^{n})$ as follows
\begin{equation*}
	\ZZo T(\om):=\int_{\R^{n}}f_{\e}(y)\cdot(s_{y})\sfo(T)(\om)\ud y.
\end{equation*}

\begin{pro}[Proposition 2, Section 15, Chapter III, of \cite{derham1984}]\label{pro2}
	The linear operator $\ZZo$ has the following properties:
	\begin{enumerate}[\quad a)]
		\item If $T$ is an $m$-dimensional current in $\R^{n}$, then $\ZZo T$ is an $m$-dimensional current.
		\item The support of $\ZZo T$ is contained in the set 
		\begin{equation*}
			E(T,\e)=\bigcup_{y\in\R^n,\, \left| y\right| <\e}\spt((s_{y})\sfo T).
		\end{equation*}
		\item If $\e$ tends to zero, $\ZZo T$ converges weakly to $T$.
		\item $\ZZo T$ is $C^{\infty}$ on $\B^n$ and $\ZZo T=T$ in $\R^{n}\smallsetminus\bar{\B}^{n}$. If $T$ is $C^{r}$ in a neighborhood of a boundary point of $\B^{n}$, then $\ZZo T$ is also $C^{r}$ on a neighborhood of this point.
	
	\end{enumerate}
\end{pro}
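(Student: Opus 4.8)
The plan is to reduce the statement to Proposition \ref{prosmooth} by transporting the operator $\Zoo$ through the diffeomorphism $h$. First I would record the structural facts about $s_{y}$ that follow from \eqref{aplis}: it is a $C^{\infty}$ diffeomorphism of $\R^{n}$ which equals the identity on $\R^{n}\smallsetminus\B^{n}$, maps $\B^{n}$ onto itself, and satisfies $s_{0}=\id$ and $s_{y}|_{\B^{n}}=h\circ\tau_{y}\circ h^{-1}$. Dualizing, for a test form $\om$ supported in $\B^{n}$ one has $s_{y}\s\om=(h^{-1})\s\tau_{y}\s h\s\om$, and interchanging the $y$-integration with the functional $T$ yields the identity
\begin{equation*}
	\ZZo T\big|_{\B^{n}}=h\sfo\,\Zoo\!\big((h^{-1})\sfo(T|_{\B^{n}})\big),
\end{equation*}
in which both pushforwards are along diffeomorphisms and hence legitimate, because the forms involved have compact support away from $\pa\B^{n}$ and their pullbacks are again compactly supported. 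On the complementary region, $\ZZo T=T$ on $\R^{n}\smallsetminus\bar\B^{n}$, since there $s_{y}\s\om=\om$ and $\int f_{\e}=1$.

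Granting this, parts (a), (b), (c) follow exactly as in the proof of Proposition \ref{prosmooth}, with $\tau_{y}$ replaced by $s_{y}$: the family $\{s_{y}\}_{|y|\le\e}$ consists of $C^{\infty}$ diffeomorphisms of $\R^{n}$ with $s_{0}=\id$, each carrying a given compact set into a fixed compact set with $C^{k}$-bounds uniform in $y$, so $\ZZo T$ is a well-defined current with support in $\bigcup_{|y|<\e}s_{y}(\spt T)=E(T,\e)$, and because $y\mapsto s_{y}\s\om$ is continuous into $\Ddd^{m}(\R^{n})$ at $y=0$ while $f_{\e}(y)\ud y\rightharpoonup\de_{0}$ as $\e\to 0$, one gets $\ZZo T(\om)\to T(s_{0}\s\om)=T(\om)$. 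For the first half of (d) the displayed identity does the work: $\Zoo$ of any current is $C^{\infty}$ by Proposition \ref{prosmooth}(b) (mollification of its coefficients), the pushforward of a $C^{\infty}$ current under the $C^{\infty}$ diffeomorphism $h$ is again $C^{\infty}$, so $\ZZo T$ is $C^{\infty}$ on $\B^{n}$; and it equals $T$ on $\R^{n}\smallsetminus\bar\B^{n}$.

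The remaining, genuinely new point is the boundary regularity in (d), and I expect the main obstacle to be quantitative rather than conceptual. Suppose $T$ is $C^{r}$ on a neighborhood $V$ of $p\in\pa\B^{n}$. Using the explicit $g$ — with $g(r)=\exp\!\big(1/(1-r)^{2}\big)$ near $r=1$, equivalently $g^{-1}(s)=1-(\log s)^{-1/2}$ near $s=\infty$ — one checks that although the derivatives of $h^{-1}$ blow up as $x\to\pa\B^{n}$, they are offset by the decay at infinity of the higher derivatives of $h$, so that for each $k$ the norms $\|s_{y}-\id\|_{C^{k}(U\cap\B^{n})}$ stay bounded uniformly in $|y|\le\e$ on any small enough neighborhood $U$ of $p$, and tend to $0$ as $U$ shrinks to $p$. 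One may thus choose $U\subseteq V$ with $s_{y}(U)\subseteq V$ for all $|y|\le\e$. On $U$ the current $(s_{y})\sfo T$ is then represented by a $C^{r}$ form whose coefficients are the fixed $C^{r}$ coefficients of $T$ composed with $s_{y}^{-1}$, multiplied by minors of $Ds_{y}^{-1}$; these are continuous in $y$ and uniformly $C^{r}(U)$-bounded for $|y|\le\e$ (only $s_{y}\in C^{r+1}$ is needed, which is automatic). Hence $\ZZo T|_{U}=\int_{|y|<\e}f_{\e}(y)\,\big((s_{y})\sfo T|_{U}\big)\ud y$ has $C^{r}$ coefficients, by differentiation under the integral sign up to order $r$, whose justification is precisely the uniform bounds just obtained. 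The delicate step is the uniform $C^{k}$ tameness of $s_{y}=h\circ\tau_{y}\circ h^{-1}$ near $\pa\B^{n}$; the rest is bookkeeping.
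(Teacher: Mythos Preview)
The paper does not prove Proposition~\ref{pro2}; it is stated with attribution to de Rham's book \cite{zbMATH03848254} and used as a black box, so there is no proof in the paper to compare your proposal against.

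That said, your sketch is essentially de Rham's own argument: transport the Euclidean smoothing $\Zoo$ through the diffeomorphism $h$ to obtain the identity $\ZZo T|_{\B^{n}}=h\sfo\,\Zoo\!\big((h^{-1})\sfo(T|_{\B^{n}})\big)$, read off (a)--(c) and the first half of (d) from Proposition~\ref{prosmooth}, and then treat the boundary regularity by showing that the specific growth of $g$ near $r=1$ forces $\|s_{y}-\id\|_{C^{k}}$ to be small near $\pa\B^{n}$, uniformly in $|y|\le\e$. You correctly identify this last estimate as the only nontrivial step; it is precisely why $g$ is chosen to blow up doubly exponentially.
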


It is time to introduce the symmetries in the process. We say that an $m$-current $T$ on $\M$ is $\G$-invariant if $(\al^g)\sfo T=T$ for all $g\in\G$, where $\G$ is a compact Lie group acting on $\M$ by diffeomorphisms, $\al:\G\times\M\to\M$ is such action and $\al^g:\M\to\M$ is the diffeomorphism given by $\al^g(x)=\al(g,x).$

Let us see what happens when we consider $\M$ as $\R^{n}$.

\begin{pro}
	Let $\G$ be a compact Lie group acting on $\R^{n}$ and let $T$ be a $m$-current. The operators $\Zoo$ and $\ZZo$ are $\G$-invariant.
\end{pro}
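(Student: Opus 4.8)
The plan is to establish the stronger assertion that both operators \emph{commute} with the action, i.e. that $(\al^{g})\sfo\Zoo T=\Zoo\big((\al^{g})\sfo T\big)$ and $(\al^{g})\sfo\ZZo T=\ZZo\big((\al^{g})\sfo T\big)$ for every $g\in\G$ and every $m$-current $T$; applying this to a $\G$-invariant $T$, for which $(\al^{g})\sfo T=T$, yields at once that $\Zoo T$ and $\ZZo T$ are $\G$-invariant. Since the construction of $\Zoo$ and $\ZZo$ rests on the vector space structure of $\R^{n}$, the proposition is meant for a \emph{linear} action, and by averaging an inner product of $\R^{n}$ over $\G$ against the Haar measure we may and do assume $\al^{g}\in O(n)$ for all $g\in\G$.

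With the action orthogonal, three features of De Rham's construction do all the work: $f_{\e}$ is radial, so $f_{\e}\circ\al^{g}=f_{\e}$; $|\det\al^{g}|=1$, so Lebesgue measure on $\R^{n}$ is $\al^{g}$-invariant; and the diffeomorphism $h\colon\R^{n}\to\B^{n}$ is radial, hence $\al^{g}$-equivariant, which also forces $\al^{g}(\B^{n})=\B^{n}$ and $\al^{g}\circ h^{-1}=h^{-1}\circ\al^{g}$. From these I would derive the commutation relations for the building blocks: linearity of $\al^{g}$ gives immediately $\al^{g}\circ\tau_{y}=\tau_{\al^{g}(y)}\circ\al^{g}$, and for the patched map $s_{y}$ of \eqref{aplis} one checks, using $\al^{g}(\B^{n})=\B^{n}$ and the equivariance of $h$ and $h^{-1}$, that $\al^{g}\circ s_{y}\circ(\al^{g})^{-1}$ agrees with $h\circ\tau_{\al^{g}(y)}\circ h^{-1}=s_{\al^{g}(y)}$ on $\B^{n}$ and with the identity off $\B^{n}$, so $\al^{g}\circ s_{y}=s_{\al^{g}(y)}\circ\al^{g}$.

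Pushing these relations forward to currents gives $(\al^{g})\sfo(s_{y})\sfo T=(s_{\al^{g}(y)})\sfo(\al^{g})\sfo T$, and since the pushforward passes under the defining integral we get $(\al^{g})\sfo\ZZo T(\om)=\int_{\R^{n}}f_{\e}(y)\,(s_{\al^{g}(y)})\sfo\big((\al^{g})\sfo T\big)(\om)\ud y$; the change of variables $z=\al^{g}(y)$, together with $f_{\e}\circ(\al^{g})^{-1}=f_{\e}$ and the invariance of Lebesgue measure, rewrites the right-hand side as $\ZZo\big((\al^{g})\sfo T\big)(\om)$. The argument for $\Zoo$ is word for word the same with $\tau_{y}$ in place of $s_{y}$.

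The only step that requires genuine verification — the rest being bookkeeping with pushforwards and a substitution — is the commutation relation $\al^{g}\circ s_{y}=s_{\al^{g}(y)}\circ\al^{g}$, because $s_{y}$ is built by gluing a conjugated translation on $\B^{n}$ to the identity outside; the point is precisely that conjugating $h$ by $\al^{g}$ changes nothing since $h$ depends only on $\li x\ld$, and it is this radial symmetry of $f_{\e}$ and $h$, already present in De Rham's construction, that makes the whole computation close up.
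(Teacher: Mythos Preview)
Your argument is correct and more careful than the paper's. The paper's proof is a four-line computation whose key step passes from $(\tau_{y})\sfo T\big(\rho(\al^{g})\s\om\big)$ directly to $(\tau_{y})\sfo(\al^{g})\sfo T(\om)$ --- in effect swapping $(\al^{g})\sfo$ and $(\tau_{y})\sfo$ --- and then invokes the $\G$-invariance of $T$. Your route is genuinely different and avoids that unjustified swap: you first derive the correct conjugation relations $\al^{g}\circ\tau_{y}=\tau_{\al^{g}(y)}\circ\al^{g}$ and $\al^{g}\circ s_{y}=s_{\al^{g}(y)}\circ\al^{g}$ (the latter from the radiality of $h$), and then absorb the shifted parameter via the substitution $z=\al^{g}(y)$ in the defining integral, using that $f_{\e}$ is radial and that Lebesgue measure is orthogonally invariant. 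This makes explicit which features of De Rham's construction --- the radial choice of $f_{\e}$ and $h$ --- are responsible for the invariance, and it yields the stronger equivariance $(\al^{g})\sfo\Zoo=\Zoo(\al^{g})\sfo$ (and likewise for $\ZZo$) as a byproduct. The one extra hypothesis you introduce is that the action be linear, hence after averaging orthogonal; you flag this explicitly, whereas the paper leaves the nature of the action unspecified, so your version is if anything more honest about what is being assumed.
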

\begin{proof}
	This result is obtained observing that the following equations are satisfied for both operators:
	\begin{eqnarray*}
		(\al^g)\sfo \Zoo T (\om)=\Zoo T(\rho (\al^g)\s\om)&=&\int_{\R^{n}}f_{\e}(y)\cdot(\tau_{y})\sfo(T)(\rho (\al^g)\s\om)\ud y,\\
		&=&\int_{\R^{n}}f_{\e}(y)\cdot(\tau_{y})\sfo \,(\al^g)\sfo(T)(\om)\ud y,\\
		&=&\int_{\R^{n}}f_{\e}(y)\cdot(\tau_{y})\sfo(T)(\om)\ud y,\\
		&=&\Zoo T(\om),
	\end{eqnarray*}
where $\rho$ is a $C_{c}^{\infty}$ function equal to 1 in a neighborhood of the compact set 
\begin{equation*}
	\left( \left. \al^{g}\right| _{\spt(\Zoo T)}\right) ^{-1}(\spt(\om)).
\end{equation*}
\end{proof}

Before we consider the case of an arbitrary manifold, we have the following property about invariant currents.

\begin{pro}\label{proinv}
	Let $\G$ be a compact Lie group acting on a manifold $\M$ and let $T$ be a $\G$-invariant $m$-current on $\M$. Then
	\begin{equation*}
		\left| \left( \al^{g}\right)\sfo T(\om) \right| \leq C \left| T(\om)\right| 
	\end{equation*}
for every $\om\in \Ddd^{m}(\M)$.
\end{pro}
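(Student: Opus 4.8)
The plan is to obtain the estimate directly from the definition of $\G$-invariance; the statement is essentially immediate once the push-forward is set up correctly. First I would recall that each map $\al^{g}\colon\M\to\M$ is a diffeomorphism, so that the pull-back $(\al^{g})\s$ carries $\Ddd^{m}(\M)$ continuously into itself and preserves compact supports; hence the push-forward $(\al^{g})\sfo$ is well defined on $\Ddd_{m}(\M)$ through the duality relation $(\al^{g})\sfo T(\om)=T\big((\al^{g})\s\om\big)$, valid for every $\om\in\Ddd^{m}(\M)$. This makes sense for an arbitrary current $T$ (a continuous functional), with no hypothesis of finite mass or compact support, which is precisely why one should argue with this duality formula rather than with a mass or total-variation bound.

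Next I would invoke the hypothesis: $T$ being $\G$-invariant means $(\al^{g})\sfo T=T$ for every $g\in\G$, i.e.\ these two currents coincide as functionals on $\Ddd^{m}(\M)$. Therefore $(\al^{g})\sfo T(\om)=T(\om)$ for every $\om\in\Ddd^{m}(\M)$ and every $g\in\G$, and consequently
\[
\left|(\al^{g})\sfo T(\om)\right|=\left|T(\om)\right|\ddi C\,\left|T(\om)\right|
\]
with $C=1$; of course the inequality then also holds with any $C\ddd 1$, which is the flexible shape in which it will be combined with the other estimates later on.

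I do not expect any real obstacle here, since the content reduces to this one observation — not even the compactness of $\G$ is needed for the statement as such. Its role in the sequel is to provide a bound on $(\al^{g})\sfo T(\om)$ that is \emph{uniform in} $g\in\G$: this uniformity genuinely uses $\G$-invariance (for an arbitrary current $T$ no $g$-independent constant can exist, as one sees by taking $\om$ with $T(\om)=0$ but $T\big((\al^{g})\s\om\big)\neq 0$), and it is exactly what will allow one to interchange the averaging integral over the compact group $\G$ with the weak limit $\e\to 0$ when the equivariant smoothing operator $\Zo_{\G}$ is assembled from $\ZZo$.
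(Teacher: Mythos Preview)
Your argument is correct and is in fact cleaner than the paper's own proof. You go straight from the definition of $\G$-invariance, $(\al^{g})\sfo T = T$, to the equality $\left|(\al^{g})\sfo T(\om)\right| = \left|T(\om)\right|$, which gives the claim with $C=1$; as you observe, compactness of $\G$ plays no role here. The paper instead writes $\left|(\al^{g})\sfo T(\om)\right| = \left|T(\rho(\al^{g})\s\om)\right|$, invokes compactness and continuity of the action to replace $g$ by a maximizing $g_{0}\in\G$, and only then bounds the result by $C\left|T(\om)\right|$; this detour through $g_{0}$ is unnecessary, since the last step still relies (implicitly) on invariance and gives exactly $\left|T(\om)\right|$. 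Your remark that the uniformity in $g$ is the genuine content---and that it is what allows the later interchange of the Haar integral with the weak limit---is also on point.
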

\begin{proof} We take $\om\in \Ddd^{m}(\M)$. Then
	\begin{equation*}
		\left| \left( \al^{g}\right)\sfo T(\om) \right|=\left|T \left( \rho \left( \al^{g} \right)\s \om \right) \right| \leq \left|T \left( \rho \left( \al^{g_{0}} \right)\s \om \right) \right| \leq C \left| T(\om)\right|.
	\end{equation*}
The first inequality is satisfied for some $g_0\in\G$ because the action is continuous and $\G$ is compact.
\end{proof}

Now we are ready to prove our Theorem \ref{tedr}.

\begin{proof}[Proof of the Equivariant De Rham's Theorem]
	Let $\al$ be the action of $\G$ on $\M$. The first thing we have to do is to build a special cover since we are going to use the operator $\Zo$ defined in Theorem \ref{teoderham} which actually depends on the cover. By the Tubular Neighborhood Theorem (see \cite{alexandrino_bettiol2015}), given a point $p\in\M$ and its orbit $\G(p)$, the tubular neighborhood of this orbit can be seen as 
	\begin{equation*}
		\text{Tub}(\G(p))=\bigcup_{g\in\G}\al(g,V)=\bigcup_{g\in\G}g V
	\end{equation*}
	where $V$ is a neighborhood of $p$ such that $V$ is a smooth coordinate domain whose image under a smooth coordinate map $\vf$ is $\B^{n}\subset\R^{n}$ for some $r<1$ and $\bar{U}\subset V$:
	\begin{equation*}
		\vf(U)=B_{r}(0),\quad\vf(\bar{U})=\bar{B}_{r}(0)\quad\text{ and }\quad\vf(V)=\B^{n}.
	\end{equation*}
We say that such tubular neighborhood 
\begin{equation*}
	\bigcup_{g\in\G}g V
\end{equation*}
is generated by $V$. Finally, we get our special tubular cover built by the previous process:
\begin{equation*}
	\left\lbrace \bigcup_{g\in\G}g V_i\right\rbrace_ {i\in\N}.
\end{equation*}

Now we want to apply the smoothing operator on one of these sets $g V_{i}$. We use the corresponding coordinate chart $\vf_{i}:V_{i}\to \R^{n}$ and a nonnegative function $h_{i}$ of class $C^{\infty}$ with support in $V_{i}$ that is equal to 1 in a neighborhood of $\bar{U}_{i}$ contained in $V_{i}$. If $T\in\Ddd_{m}(\M)$, then we set $T'=h_{i}T$ and $T''=T-T'$. We note that $T'$ is an $m$-current that is equal to $T$ in $\bar{U}_{i}$ and its support is contained in $V_{i}$. Then we define 
\begin{eqnarray}
\left. \Zo\right|_{gV_{i}}T(\om)&:=&\left( \al^{g}\circ\vf_{i}^{-1} \right)\sfo \ZZo\left( \vf_{i}\circ\left(\al^{g} \right)^{-1}  \right) \sfo\left( \al^{g}\right)\sfo h_{i}T(\om)+ \left( \al^{g}\right)\sfo T''(\om)\nonumber\\
&=&\left( \al^{g}\circ\vf_{i}^{-1} \right)\sfo \ZZo \left( \vf_{i}\circ\left(\al^{g} \right)^{-1}  \right) \sfo\left( h_{i}\circ\left( \al^{g}\right)^{-1}  \right) \left( \al^{g}\right)\sfo T(\om)\nonumber\\
& &+ \left( \al^{g}\right)\sfo T''(\om)\nonumber\\
&=&\left( \al^{g}\circ\vf_{i}^{-1} \right)\sfo \ZZo\left( h_{i}\circ\left( \al^{g}\right)^{-1}  \right)\left( \vf_{i}\circ\left( \al^{g}\right)^{-1} \right)^{-1}  \left( \vf_{i}\circ\left(\al^{g} \right)^{-1}  \right) \sfo\left( \al^{g}\right)\sfo T(\om) \nonumber\\
& &+ \left( \al^{g}\right)\sfo T''(\om)\nonumber\\
&=&\left( \al^{g} \right)\sfo \left( \vf^{-1}_{i} \right)\sfo\ZZo\left( h_{i}\circ\vf^{-1}_{i} \right)\left( \vf_{i}\right)\sfo T  (\om)     + \left( \al^{g}\right)\sfo T''(\om)\nonumber\\
&=&\left( \al^{g} \right)\sfo \left( \vf^{-1}_{i} \right)\sfo\ZZo \left( \vf_{i}\right)\sfo h_{i}T(\om)       + \left( \al^{g}\right)\sfo T''(\om)\nonumber\\
&=:& \left( \al^{g} \right)\sfo\left.\Zo\right|_{V_{i}} T(\om),\label{opinvra}
\end{eqnarray}
for every $\om\in\Ddd^{m}(\M)$, using the following equation
\begin{equation*}
	T=(\al^{g})\sfo T=(\al^{g})\sfo (T'+T'')=(\al^{g})\sfo T' +(\al^{g})\sfo T''
\end{equation*}
and noticing that $(\al^{g})\sfo T'$ is an $m$-current that is equal to $T$ in $g\bar{U}_{i}$ and its support is contained in $g V_{i}$ by construction.

After all the calculations in (\ref{opinvra}) we can define an operator on the tube $\bigcup_{g\in\G}g V_i$ using Haar's Theorem (see Theorem 3.1 of \cite{bredon1972}) as follows:
\begin{equation*}
	\Zo^{\G}_{i}T(\om)=\frac{1}{\left| \G\right|}\int_{\G} \left( \al^{g} \right)\sfo\left.\Zo\right|_{V_{i}} T(\om)\,\ud g,
\end{equation*}
where $\left| \G\right|$ denotes the volume of $\G$. Finally, in the same way as in the proof of de Rham's approximation theorem we can define the operator $\Zo^{\G}:\Ddd_{m}(\M)\to\Ddd_{m}(\M)$:
\begin{equation*}
\Zo^{\G}T(\om)=\lim_{l\to\infty}\Zo^{\G}_{l}\circ\cdots\Zo^{\G}_{2}\circ\Zo^{\G}_{1}T(\om).
\end{equation*}

Thanks to the properties of the Haar integral the limit exists and it is invariant by construction.

By using standard integration theory techniques, the compactness of $\G$ and Proposition 3.2 of \cite{bredon1972}, we obtain that $\Zo ^{\G}T$ is a current of class $C^{\infty}$.

Finally, $\Zo^{\G}$ weakly converges because it converges at every step of the construction and we use the properties of the integral and Proposition \ref{proinv}:
\begin{eqnarray*}
\left|\Zo^{\G}_{i}T(\om_k)\right|&=&\left|\frac{1}{\left|\G\right|}\int_{\G} \left( \al^{g} \right)\sfo\left.\Zo\right|_{V_{i}} T(\om_k)\,\ud g\right|\leq\frac{1}{\left|\G\right|} \int_{\G}\left| \left( \al^{g} \right)\sfo\left.\Zo\right|_{V_{i}} T(\om_k)\right|\,\ud g\\
&\leq& C\left| \left.\Zo\right|_{V_{i}} T(\om_k) \right|\to C \left| T(\om)\right|  \text{ as }k\to\infty,
\end{eqnarray*} 
from which we deduce the weakly convergence of $\Zo^\G$.
\end{proof}

\section{Spaces with bounded curvature}
We introduce some basic notation and definitions in preparation to prove the equivariant version of Nikolaev's Theorem. Our proof is based on Nikolaev's proof in \cite{nikolaev1991}. We can see the details about Lipschitz convergence in \cite{burago_burago_ivanov2001}.

If $U\subset \R^{n}$, we denote by $\Mr^{2,p}(U)$ the space of Riemannian metrics
\begin{equation*}
\gr(x)=\left(\gr_{ij}(x) \right)_{i,j=1,\ldots,n}
\end{equation*}
 which are twice continuously differentiable almost everywhere on the domain $U$ with $1\leq p\leq\infty$ for which the following norm is finite:
 \begin{equation*}
 	\left|\gr \right|_{\Mr^{2,p}(U)}=\max_{i,j}\left\lbrace  \left| g_{ij}\right|_{W^{2,p}(U)} \right\rbrace. 
 \end{equation*}

Let $\M$ be a differentiable manifold with fixed  $C^{\infty}$-atlas $\hr=\left\lbrace (U_{i},\vf_{i}) \right\rbrace_{i\in\N}$. By $\Mr^{2,p}_{\hr}(\M)$ we denote the set of continuous Riemannian metrics $\gr$ on $\M$ for which the following seminorms are finite:
\begin{equation*}
	\left| \gr \right|_{\Mr^{2,p}_{\hr}(\M),i}=\left| \left(\vf^{-1}_{i} \right)\s\gr \right|_{\Mr^{2,p}(V_{i})},\quad V_{i}=\vf_{i}(U_{i}),\quad i\in\N.
\end{equation*}

One defines spaces $\Mr^{r,\al}_{\hr}(\M)$, $r\in\N$, $0<\al<1$, with seminorms $\left| \gr \right|_{\Mr^{r,\al}_{\hr}(\M),i}$ analogously using $C^{r,\al}(U)$ instead of $W^{2,p}(U)$. 

We denote by $\Mr^{\infty}_{\hr}(\M)$ the set of smooth Riemannian metrics on $\M$ which are infinitely differentiable with respect  to the atlas $\hr$.

Let $\gr\in\Mr^{2,p}_{\hr}(\M)$. Then at almost each point $p\in\M$ and section $\si\subset T_{p}\M$ we can formally calculate the sectional curvature $K_{\si}(p)$ with respect to the Riemannian metric $\gr$, by using the Christoffel symbols and the fact that our metric has second derivatives almost everywhere. By $\underaccent{\bar}{K}_{\gr,\mu}$, $\bar{K}_{\gr,\mu}$, we denote the essential infimum and essential supremum, respectively, of $K_{\si}(p)$ for every point $p\in\M$ and every section $\si\subset T_{p}\M$.

We are ready to see the equivariant approximation process of spaces with bounded curvature, Theorem \ref{ENATeo}.

\begin{proof}[Proof of the Equivariant Nikolaev's Approximation Theorem]
	We will use the same cover constructed in the proof of Theorem \ref{tedr}: 
	\begin{equation*}
		\left\lbrace \bigcup_{g\in\G}g V_i\right\rbrace_ {i\in\N}.
	\end{equation*}
As before, we want to define the regularization operator of metrics $\Hr$ in $g V_{i}$. To do this, we use the corresponding coordinate chart $\vf_{i}:V_{i}\to \R^{n}$ and a nonnegative function $h_{i}$ of class $C^{\infty}$ with support in $V_{i}$ that is equal to 1 on a neighborhood of $\bar{U}_{i}$ contained in $V_{i}$. Then we decompose $\gr_{0}$ into two smooth metric tensors $\gr_{0}=\gr'_{i}+\gr''_{i}$, where $\gr'_{i}=h_{i}\,\gr_{0}$ and $\gr''_{i}=\gr_{0}-\gr'_{i}$. We let
\begin{equation*}
	\Hr_{e,i}(\gr_{0})=\left( \al^{e}\circ\vf_{i}^{-1} \right)\s \Hrt_\e \left( \vf_{i}\circ\left(\al^{e} \right)^{-1}  \right) \s\left( \al^{e}\right)\s h_{i}\,\gr_{0}+ \left( \al^{e}\right)\s \gr''_{i},
\end{equation*}
where 
\begin{equation*}
	\Hrt_\e(\gr)=\int_{\R^{n}}f_{\e}(y)\cdot(s_{y})\s(\gr)\ud y
\end{equation*}
for a Riemannian metric $\gr$ on $\R^{n}$ and $e\in\G$ is the identity element. We note that 
\begin{equation*}
	\gr_{e,i}(x):=\left.\Hr_{e,i}(\gr_{0})\right|_{x}=\gr_{0}(x)
\end{equation*}
for $x\in\M\smallsetminus  V_{i}$ and it follows from the last two equations that $\gr_{g,i}$ is a Riemannian metric on $\M$. This is the regularization process of the metric on $V_{i}$. In order to make this process in $g V_{i}$, we take the decomposition $\gr_{0}=\left( \al^{g}\right)\s \gr_{0}=\left( \al^{g}\right)\s\gr_i'+\left( \al^{g}\right)\s\gr_i''$ and then
\begin{equation*}
	\Hr_{g,i}(\gr_{0}):=\left( \al^{g}\circ\vf_{i}^{-1} \right)\s \Hrt_{\e} \left( \vf_{i}\circ\left(\al^{g} \right)^{-1}  \right) \s\left( \al^{g}\right)\s h_{i}\,\gr_{0}+ \left( \al^{g}\right)\s \gr''_{i}=\left( \al^{g}\right)\s\Hr_{e,i}(\gr_{0}).
\end{equation*}
As in the proof of Theorem \ref{tedr}, we define a metric on the tube 
\begin{equation*}
	\bigcup_{g\in\G}g V_i
\end{equation*}
using Haar's Theorem (see Theorem 3.1 of \cite{bredon1972}) by letting
\begin{equation*}
	\Hr^{\G}_{i}(\gr_{0})=\frac{1}{\left|\G\right|}\int_{\G} \left( \al^{g} \right)\s \Hr_{e,i}(\gr_{0}) \ud g.
\end{equation*}
Finally, also as in the proof of Theorem \ref{tedr}, we define the operator 
\begin{equation*}
\Hr^{\G}:\Mr^{2,p}_{\hr}(\M)\to\Mr^{\infty}_{\hr}(\M)
\end{equation*}
 by setting:
\begin{equation*}
	\Hr^{\G}(\gr_0)=\lim_{s\to\infty}\Hr^{\G}_{s}\circ\cdots\circ\Hr^{\G}_{2}\circ\Hr^{\G}_{1}(\gr_{0}),
\end{equation*}
where $\hr$ is the atlas defined above. Since $\M$ is compact, the limit exists, i.e. we finish the regularization process and obtain a $\G$-invariant Riemannian metric on $\M$. To see this fact we take coordinates $\vf_i$ in $g V_i$ for some $i\in\N$ and $g\in\G$, and coordinate vector fields $\{\partial_j\}_{j=1}^{n}$, then

\begin{equation*}
	\Hr^{\G}(\gr_{0})(\partial_l,\partial_m)=\Hr^{\G}(\gr_{0})_{l\,m}=\Hr^{\G}_{s'_i}\circ\cdots\circ\Hr^{\G}_{s_i}(\gr_{0})_{l\,m}=\Hr_{e,i}(\gr_{0})_{l\,m}=(\gr_{0})_{l\,m}
\end{equation*}

Also we notice that 
\begin{equation*}
	\left| \Hr^{\G}_{i}(\gr_{0})_{l\,m} \right| = \left|\int_{\G} \left( \al^{g} \right)\s \Hr_{e,i}(\gr_{0})_{l\,m} \ud g \right|\leq \int_{\G} \left|  \left( \al^{g} \right)\s \Hr_{e,i}(\gr_{0})_{l\,m} \right|  \ud g
\end{equation*}
and since $\G$ is compact there exists $\tilde{g}\in\G$ such that 
\begin{equation*}
	\int_{\G} \left|  \left( \al^{g} \right)\s \Hr_{e,i}(\gr_{0})_{l\,m} \right|  \ud g	\leq \int_{\G} \left|  \left( \al^{\tilde{g}} \right)\s \Hr_{e,i}(\gr_{0})_{l\,m} \right|  \ud g.
\end{equation*}
Now, since $\M$ is compact, we get a constant $C$ such that
\begin{equation*}
	\int_{\G} \left|  \left( \al^{\tilde{g}} \right)\s \Hr_{e,i}(\gr_{0})_{l\,m} \right|  \ud g\leq C\int_{\G} \left|   \Hr_{e,i}(\gr_{0})_{l\,m} \right|  \ud g.
\end{equation*}
Therefore,
\begin{equation}\label{gequ}
	\left| \Hr^{\G}_{i}(\gr_{0})_{l\,m} \right|\leq C \left| \G\right| \,\left|   \Hr_{e,i}(\gr_{0})_{l\,m} \right|.
\end{equation}

To continue with the proof we need the following couple of estimates. The first one is exactly the same as Lemma 3.1 of \cite{nikolaev1991}. The second one is similar to Lemma 3.2 also from \cite{nikolaev1991}.  

\begin{lem}[Lemma 3.1 of \cite{nikolaev1991}]\label{lemma3.1}
	Let $U\subset \R^{n}$ be a domain. The operator $\Hrt_{\e}$ maps ${\Mr^{2,p}(U)}$ into $\Mr^{2,p}(U)$ for any $1\leq p \leq \infty$ while for each positive number $\de>0$ we can find a $\nu_{\de,p}>0$ such that for all $0<\e<\nu_{\de,p}$ we have:
	\begin{enumerate}
		\item For $1\leq p\leq\infty$ we have the estimate
		\begin{equation}\label{eq1lemm3.1}
			\left| \Hrt_\e(\textbf{\gr})-\textbf{\gr} \right|_{\Mr^{2,p}(U)}\leq\de.
		\end{equation}
		\item Let $K_{\si}(x)$, $K^{\e}_{\si}(x)$ be the sectional curvatures calculated from the Riemannian metrics $\textbf{\gr}\in\Mr^{2,p}(U)$ and $\textbf{\gr}_{\e}=\Hrt_{\e}(\textbf{\gr})$, respectively, where $x\in U$ and $\si\subset T_{x}U$ is a section (i.e. a $2$-dimensional subspace). Then if 
		\begin{eqnarray*}
			-\infty<&\bar{K}_{\si}(U)=\esssup_{x\in U}\{K_{\si}(x)\} &<\infty,\\
			-\infty<&\underaccent{\bar}{K}_{\si}(U)=\essinf_{x\in U}\{K_{\si}(x)\} &<\infty,
		\end{eqnarray*}
		then under the condition that $p>n$ the same bounds are true for the corresponding quantities $\bar{K}^{\e}_{\si}(U)$, $\underaccent{\bar}{K}^{\e}_{\si}$ calculated from the metric $\textbf{\gr}_{\e}$ and for each case the following holds:
		\begin{equation}\label{eq2lemm3.1}
			\left|\bar{K}_{\si}(U)-\bar{K}^{\e}_{\si}(U)\right|,\left|\underaccent{\bar}{K}_{\si}(U)-\underaccent{\bar}{K}^{\e}_{\si}\right|<\delta.
		\end{equation}
	\end{enumerate}
\end{lem}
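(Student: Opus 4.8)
The plan is to reproduce Nikolaev's argument for his Lemma~3.1, whose mechanism rests on two observations. The first is a change of variables unwinding $\Hrt_\e$ into an ordinary mollification: since $s_y=h\circ\tau_y\circ h^{-1}$ on $\B^{n}$ and $s_y=\id$ off $\B^{n}$, one has $h\s\circ(s_y)\s=\tau_y\s\circ h\s$, so $h\s\big(\Hrt_\e(\gr)\big)$ has on $\B^{n}$ the components $\big(h\s\gr\big)_{ij}*f_\e$; conjugation by the fixed $C^{\infty}$ diffeomorphism $h$ turns $\Hrt_\e$ into componentwise convolution with the radial mollifier $f_\e$, while $\Hrt_\e(\gr)=\gr$ on $\R^{n}\smallsetminus\bar{\B}^{n}$. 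The second is the quantitative estimate that $s_y\to\id$ in $C^{3}(\bar{\B}^{n})$ as $y\to0$, uniformly up to the boundary; in particular the derivatives of $s_y$ of order $\ddi3$ are bounded for $\ai y\ad$ small. Granting these, the mapping property $\Hrt_\e\colon\Mr^{2,p}(U)\to\Mr^{2,p}(U)$ follows from the boundedness of mollification on $W^{2,p}$ of a bounded domain together with the boundedness of the pullbacks $(s_y)\s$ on $W^{2,p}$, uniformly for $\ai y\ad$ small, the latter using the derivative bounds on $s_y$.

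For~\eqref{eq1lemm3.1} I would write
\[
 \Hrt_\e(\gr)-\gr=\int_{\R^{n}}f_\e(y)\big((s_y)\s\gr-\gr\big)\ud y
\]
and bound $\left|(s_y)\s\gr-\gr\right|_{\Mr^{2,p}(U)}$ uniformly for $\ai y\ad<\e$. With $s_y\to\id$ in $C^{3}$ in hand this is the standard density argument: given $\eta>0$, choose a smooth metric $\gr_\eta$ with $\left|\gr-\gr_\eta\right|_{\Mr^{2,p}(U)}<\eta$; estimate $\left|(s_y)\s\gr_\eta-\gr_\eta\right|_{\Mr^{2,p}(U)}$ directly from the chain rule and the smallness of $\li s_y-\id\ld_{C^{3}}$; and control $\left|(s_y)\s(\gr-\gr_\eta)\right|_{\Mr^{2,p}(U)}\ddi C\left|\gr-\gr_\eta\right|_{\Mr^{2,p}(U)}$ by the uniform boundedness of $(s_y)\s$. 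Integrating against $f_\e$ and shrinking $\e$ gives~\eqref{eq1lemm3.1}.

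For~\eqref{eq2lemm3.1} assume $p>n$, so $W^{2,p}(U)\hookrightarrow C^{1,\al}(U)$ with $\al=1-n/p$, and put $\gr_\e=\Hrt_\e(\gr)$. By~\eqref{eq1lemm3.1} the coefficients of $\gr_\e$ and their first derivatives converge to those of $\gr$ uniformly on $U$, and their second derivatives converge in $L^{p}(U)$. The Riemann $(0,4)$-tensor is
\[
 R_{ijkl}=\tfrac12\big(\pa_j\pa_k \gr_{il}+\pa_i\pa_l \gr_{jk}-\pa_i\pa_k \gr_{jl}-\pa_j\pa_l \gr_{ik}\big)+\gr_{mn}\big(\Ga^{m}_{jk}\Ga^{n}_{il}-\Ga^{m}_{jl}\Ga^{n}_{ik}\big),
\]
i.e.\ it is linear with constant coefficients in the second derivatives of $\gr$ plus a $C^{0,\al}$ function of $\gr$ and $\pa\gr$; in particular its second-order part commutes exactly with the averaging $\int f_\e(y)(s_y)\s(\cdot)\ud y$, and the remaining part converges uniformly. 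Combining this with the naturality $R^{(s_y)\s\gr}=(s_y)\s R^{\gr}$ and the hypothesis that the sectional curvatures of $\gr$ are essentially bounded (so that $R^{\gr}\in L^{\infty}(U)$ and the errors from $Ds_y-\id$ and from the area normalization are absorbed uniformly), one obtains the pointwise comparison
\[
 K^{\e}_{\si}(x)=\int_{\R^{n}}f_\e(y)\,K_{\si}\big(s_y(x)\big)\ud y+\rho_\e(x),\qquad \li\rho_\e\ld_{L^{\infty}(U)}\to0 .
\]
Since $K_{\si}(x')\in[\underaccent{\bar}{K}_\si(U),\bar K_\si(U)]$ for a.e.\ $x'$, the $f_\e$-average lies in the same interval, giving $\bar K^{\e}_\si(U)\ddi\bar K_\si(U)+\de$ and $\underaccent{\bar}{K}^{\e}_\si(U)\ddd\underaccent{\bar}{K}_\si(U)-\de$ once $\e$ is small. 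For the two remaining inequalities I would argue by contradiction: \eqref{eq1lemm3.1} together with the structure of $R$ gives $K^{\e}_\si\to K_\si$ in $L^{p}(U)$, so a hypothetical sequence $\e_j\to0$ with, say, $\bar K^{\e_j}_\si(U)\ddi\bar K_\si(U)-\de$ would, after passing to an a.e.-convergent subsequence, contradict the essential supremum of $K_\si$ at a Lebesgue point where $K_\si$ exceeds $\bar K_\si(U)-\de/2$; the estimate for $\underaccent{\bar}{K}^{\e}_\si$ is symmetric.

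The main obstacle is the uniform $C^{3}$-estimate for $s_y$ near $\pa\B^{n}$: away from the boundary $s_y\to\id$ is immediate, but the blow-up of $h^{-1}$ at $\pa\B^{n}$ must be shown to be exactly compensated by the exponential tail $\exp\big(1/(1-r)^{2}\big)$ in the definition of $g$, so that the displacement carried by $s_y$ and all its derivatives of order $\ddi3$ die out uniformly up to the boundary; this is the only genuinely computational step. A secondary point is the bookkeeping in the comparison above (keeping $\rho_\e$ uniformly small while the second derivatives of $\gr$ are only in $L^{p}$), which is handled by isolating the constant-coefficient second-order part of the curvature, which averages exactly, and using $R^{\gr}\in L^{\infty}(U)$.
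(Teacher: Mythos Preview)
Your approach matches the paper's: the paper does not give a full proof of this lemma but only records that the argument rests on the estimate $\left| s_{y}-\id_{U}\right|_{C^{3}(U)}\ddi c_{U}(\e)\to 0$ together with ``the usual way of computing the sectional curvature,'' which is precisely the mechanism you isolate and flesh out (including the boundary issue for $s_y$ near $\pa\B^{n}$, which the paper does not discuss). One small imprecision worth tightening: the second-order part of $R$ does not literally ``commute exactly'' with $\int f_\e(y)(s_y)\s(\cdot)\ud y$, since pullback by $s_y$ introduces Jacobian factors in the coordinate expression of $\pa^{2}\big((s_y)\s\gr\big)$; what your argument actually uses---and correctly---is that $L$ is linear in $\gr$ and hence commutes with the integral, after which the naturality $R^{(s_y)\s\gr}=(s_y)\s R^{\gr}$ and the uniform $C^{1}$-closeness of $(s_y)\s\gr$ to $\gr$ (from the $C^{3}$-estimate and the embedding $W^{2,p}\hookrightarrow C^{1,\al}$) handle both the quadratic first-order terms and the Jacobian discrepancies, with the assumed $L^{\infty}$ bound on $K_\si$ absorbing the residual errors.
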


The proof of the previous lemma is based on an estimate of the application $s_y$ defined in (\ref{aplis}):
\begin{equation*}
	\left| s_{y\e}- \id_{U} \right|_{C^3(U)} \leq c_{U}(\e),
\end{equation*}
where $\left| y \right|\leq 1 $ and $c_{U}(\e)\to0$ as $\e\to 0$, which follows from the fact that $s_y$ is a diffeomorphism equal to 1 outside of the unit ball and the usual way of computing the sectional curvature.

\begin{lem}[Lemma 3.2 of \cite{nikolaev1991}]\label{lemma3.2}
	Let $\M$ be a differentiable manifold, $\hr$ be a $C^{\infty}$-differentiable locally finite countable atlas on $\M$ with the help of the operator $\Hr^{\G}$ constructed before. The following assertions hold:
	\begin{enumerate}
		\item For an arbitrary bounded sequence of numbers $\e_{i}$, $i=1,2,\ldots$, the operator $\Hr^{\G}$ maps $\Mr^{2,p}_{\hr}(\M)$ into $\Mr^{\infty}_{\hr}(\M)$.
		\item For each natural number $k$ and arbitrary sequence of positive numbers $a_\nu$, $\nu\in\N$, there exists a uniformly bounded subsequence of positive numbers $\e_{k_i}$, $i,k\in\N$, such that for $\nu\in\N$, $\textbf{\gr}\in\Mr^{2,p}_\hr(\M)$ the following estimate holds:
		\begin{equation}\label{eq1lemm3.2}
			\left| \Hr^{\G}(\textbf{\gr})-\textbf{\gr} \right|_{\Mr^{2,p}_{\hr}(\M),\nu}\leq\frac{a_\nu}{k}.
		\end{equation}
		Moreover, if $p>n$, we have the estimate for then in addition we can assert the existence of an estimate for $\al=1-n/p$
		\begin{equation}\label{eq2lemm3.2}
			\left| \Hr^{\G}(\textbf{\gr})-\textbf{\gr} \right|_{\Mr^{1,\al}_{\hr}(\M),\nu}\leq\frac{a_\nu}{k}
		\end{equation}
		and the following relations hold:
		
		\begin{eqnarray}
			\left|\bar{K}_{\textbf{\gr}_{k},\mu}-\bar{K}_{\textbf{\gr},\mu}\right|\leq\frac{1}{k}& & \text{if }\bar{K}_{\textbf{\gr},\mu}<\infty,\label{eq3lemm3.2}\\
			\bar{K}_{\textbf{\gr}_{k},\mu}=+\infty & &\text{if }\bar{K}_{\textbf{\gr},\mu}=+\infty,\label{eq4lemm3.2}
		\end{eqnarray}
		where $\textbf{\gr}_{k}=\Hr(\textbf{\gr})$ is calculated with this sequence, and
		\begin{eqnarray}
			\left|\underaccent{\bar}{K}_{\textbf{\gr}_{k},\mu}-\underaccent{\bar}{K}_{\textbf{\gr},\mu}\right|\leq\frac{1}{k}& &\text{if }\underaccent{\bar}{K}_{\textbf{\gr},\mu}>-\infty,\label{eq5lemm3.2}\\
			\underaccent{\bar}{K}_{\textbf{\gr}_{k},\mu}=-\infty & &\text{if }\underaccent{\bar}{K}_{\textbf{\gr},\mu}=-\infty.\label{eq6lemm3.2}
		\end{eqnarray}
	\end{enumerate}
\end{lem}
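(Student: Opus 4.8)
The strategy is to reduce everything to the local, Euclidean statement of Lemma \ref{lemma3.1} and then propagate the estimates through the finitely-many-at-each-point composition defining $\Hr^{\G}$, exactly as in Nikolaev's original argument but keeping track of the extra averaging over $\G$. First I would prove part (1): given a bounded sequence $\e_i$, I must show $\Hr^{\G}(\gr)$ is smooth with respect to the atlas $\hr$. Since the cover $\{\bigcup_{g\in\G} g V_i\}_{i\in\N}$ is locally finite, on any chart domain only finitely many operators $\Hr^{\G}_i$ act nontrivially, so $\Hr^{\G}(\gr)$ is a finite composition $\Hr^{\G}_{s_i'}\circ\cdots\circ\Hr^{\G}_{s_i}(\gr)$ there; each $\Hr^{\G}_i$ is the Haar average of pushforwards $(\al^g)\s\Hr_{e,i}(\gr_0)$, and by Lemma \ref{lemma3.1} (first sentence) together with Proposition 3.2 of \cite{zbMATH03390006} (differentiation under the Haar integral), the image lands in $\Mr^{\infty}_{\hr}(\M)$.

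For part (2), the plan is: fix $k$ and the sequence $a_\nu$. Working chart by chart, in the coordinate domain of $V_\nu$ the operator $\Hr^{\G}_\nu$ differs from the identity by a term controlled, via \eqref{gequ}, by $C\,\ai\G\ad\,\ai\Hr_{e,\nu}(\gr_0)_{lm}-(\gr_0)_{lm}\ad$; but $\Hr_{e,\nu}(\gr_0)$ is built from $\Hrt_\e$ applied in the chart, so \eqref{eq1lemm3.1} of Lemma \ref{lemma3.1} gives, for $\e$ below a threshold $\nu_{\de,p}$, an $\Mr^{2,p}$-bound of size $\de$. Choosing the parameters $\e_{k_i}$ successively small enough — first to make each single step's contribution at chart $\nu$ at most $a_\nu/(k\cdot 2^{\nu})$ or so, then summing the geometric-type series coming from the finitely many compositions affecting chart $\nu$ — yields \eqref{eq1lemm3.2}. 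The Sobolev embedding $W^{2,p}\hookrightarrow C^{1,\al}$ for $p>n$ with $\al=1-n/p$ upgrades this to \eqref{eq2lemm3.2}.

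For the curvature estimates \eqref{eq3lemm3.2}–\eqref{eq6lemm3.2}, I would argue locally and use part (2) of Lemma \ref{lemma3.1}: since $K_\si$ depends on the metric through its first and second derivatives (Christoffel symbols), and the $C^{1,\al}$-closeness \eqref{eq2lemm3.2} controls exactly those, each single application of $\Hrt_\e$ moves $\bar K_\si(U)$, $\underaccent{\bar}K_\si(U)$ by at most $\de$ on each chart domain; composing the finitely many relevant steps and taking essential suprema/infima over $\M$ (using compactness to reduce to finitely many charts) gives the $1/k$ bound when the original quantity is finite, and preserves $\pm\infty$ otherwise — here the key point is that $\Hrt_\e$ of a metric with a genuine curvature singularity still has that singularity because $s_{y\e}\to\id$ in $C^3$, so curvature converges pointwise a.e. The $\G$-averaging does not spoil these bounds because pushforward by an isometry $\al^g$ of $(\M,d(\gr_0))$ preserves sectional curvature pointwise, so $(\al^g)\s\Hr_{e,i}(\gr_0)$ has the same curvature bounds as $\Hr_{e,i}(\gr_0)$, and a convex (Haar) combination of metrics with sectional curvature in $[\underaccent{\bar}K,\bar K]$ need not itself have curvature in that interval in general — \textbf{this is the main obstacle}. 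To handle it I would not average the metrics naively but exploit that here all the averaged metrics $(\al^g)\s\Hr_{e,i}(\gr_0)$ are isometric images of one another under isometries of the \emph{fixed} background $d(\gr_0)$, and that $\Hr_{e,i}(\gr_0)$ is $C^{1,\al}$-close to $\gr_0$ whose curvature is already within the bounds (in the comparison sense), so passing through Lemma \ref{lemma3.1}(2) before averaging, combined with the fact that the $\G$-orbit of metrics is uniformly $C^{1,\al}$-close to the $\G$-invariant $\gr_0$, keeps every member's formally-computed curvature within $\de$ of the target interval; the Haar average then inherits the same $\de$-bound by continuity of the curvature functional in the $C^{1,\al}$ topology. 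Carrying out this last continuity-and-uniformity argument carefully, matching the quantifier order of Nikolaev's Lemma 3.2, is where the real work lies.
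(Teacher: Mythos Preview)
Your plan for part (1) and for \eqref{eq1lemm3.2} matches the paper's argument: the construction of $\Hr^{\G}$, inequality \eqref{gequ}, and \eqref{eq1lemm3.1} of Lemma~\ref{lemma3.1} combine exactly as you describe. For \eqref{eq2lemm3.2} the paper invokes the Rellich--Kondrashov theorem where you invoke the Sobolev--Morrey embedding $W^{2,p}\hookrightarrow C^{1,\al}$; these are interchangeable here.

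The genuine gap is in your curvature argument. You write that ``the $C^{1,\al}$-closeness \eqref{eq2lemm3.2} controls exactly those'' first and second derivatives, and later that you would finish via ``continuity of the curvature functional in the $C^{1,\al}$ topology.'' Both claims are false: $C^{1,\al}$ controls only zeroth and first derivatives, while sectional curvature depends pointwise on \emph{second} derivatives of $\gr_{ij}$, so curvature is not continuous for the $C^{1,\al}$ topology and \eqref{eq2lemm3.2} alone cannot yield \eqref{eq3lemm3.2}--\eqref{eq6lemm3.2}. You correctly identify the obstacle that a Haar average of metrics need not inherit curvature bounds, but your proposed fix does not close it. The paper's route is different and avoids this pitfall: it uses the $\Mr^{2,p}$ estimate \eqref{eq1lemm3.2} (which \emph{does} control second derivatives, in $L^{p}$), passes to convergence ``almost everywhere'' of the second derivatives of the pullback metric in each chart, and then reads off the essential-sup/inf curvature bounds from the pointwise formula together with the local curvature estimate \eqref{eq2lemm3.1} of Lemma~\ref{lemma3.1}. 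In other words, the curvature control for $\Hr^{\G}(\gr)$ is obtained from $W^{2,p}$-closeness to $\gr$ and a.e.\ convergence of second derivatives, not from any $C^{1,\al}$ continuity statement; you should reroute your argument through \eqref{eq1lemm3.2} rather than \eqref{eq2lemm3.2}.
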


The proof of this lemma follows the same structure as in \cite{nikolaev1991}. Using the construction of the operator $\Hr^{\G}$, the inequality (\ref{gequ}) and (\ref{eq1lemm3.1}) of Lemma \ref{lemma3.1} we get (\ref{eq1lemm3.2}). To obtain (\ref{eq2lemm3.2}) we use Rellich-Kondrashov theorem (see \cite{nikolski1977}). Finally, the estimates about the curvature are the result of using (\ref{eq1lemm3.2}), the convergence ``almost everywhere'' of the second derivative of the pullback metric on a domain in $\R^n$ and the estimates for the curvature of Lemma \ref{lemma3.1}.

We now continue with the proof of our theorem introducing the following notation: $\bar{\gr}_{0}^{\nu}=\left.(\al^{g}\circ\vf^{-1}_{\nu})\s\gr_{0}\right|_{gV_{\nu}}$ and $\bar{\gr}_{0}^{\nu}(x)=((\bar{\gr}_{0}^{\nu})_{ij}(x))$, $i,j=1,\ldots,n$, $x\in\B^{n}$, and we define a sequence of positive numbers $\{a_\nu\}$ with the help of the following equations:
\begin{equation}\label{secu}
	a_\nu=\inf_{x\in\bar{\B}^{n}}\left\lbrace  \min_{\left|\xi\right|\neq0,\,\xi\in\R^{n}}  \left\lbrace \frac{ (\bar{\gr}_{0}^{\nu})_{ij}(x) \xi^{i}\xi^{j} }{\delta_{ij}\xi^{i}\xi^{j}} \right\rbrace  \right\rbrace
\end{equation}

By $\gr_{k}=\Hr^{\G}(\gr_{0})$ we denote the sequence of Riemannian metrics on $\M$ constructed in Lemma \ref{lemma3.2} for the sequence $\{a_{\nu}\}$ given by equation (\ref{secu}).

We also introduce the following notation: $d_{k}=d(\gr_{k})$, $d_{0}=d(\gr_{0})$ and \begin{equation*}
	\iota_{k}:(\M,d_{0})\to(\M,d_{k})
\end{equation*} 
is a map for which $\iota_{k}(p)=p$, $p\in\M$ and $k\in\N.$ By $((\bar{\gr}_{k}^{\nu})_{ij}(x))$, $i,j=1,\ldots,n$ and $x\in\B^{n}$, we denote the components of the metric tensor by
\begin{equation*}
\bar{\gr}_{k}^{\nu}(x)=\left.(\al^{g}\circ\vf^{-1}_{\nu})\s\Hr^{\G}(\gr_{0})\right|_{gV_{\nu}}(x).
\end{equation*}

Let $\ga:[0,l_{0}]\to\M$ be an arbitrary differentiable curve with respect to the atlas $\hr$ and  parametrized by arc length in $(\M,d_{0})$. Its length in $(\M,d_{k})$ will be denoted by $l_{k}$.

We divide $\ga$ into a finite number of arcs $\ga_{u}$ each of which is contained in some $g_{u}V_{\nu_{u}}$ for $u=1,\ldots,N$. We denote the length of $\ga_{u}$ in $(\M,d_{0})$ by $l_{0}^{(u)}$ and its length in $(\M,d_{k})$ by $l_{k}^{(u)}$. Then
\begin{equation}\label{eq3.12teo}
	\left| l_{k}^{(u)}-l_{0}^{(u)} \right|\leq\int_{0}^{l_{0}^{(u)}} \left| (\bar{\gr}_{k}^{\nu_u})_{mq}-(\bar{\gr}_{0}^{\nu_u})_{mq}\right|\left| \dga_u^m \dga_u^q\right|  \ud s. 
\end{equation}

From the expression for $a_{\nu}$ it follows that 
\begin{equation*}
	\max_{\substack{m,q=1,2,\ldots,n\\0\leq s\leq l_{0}^{(u)}}}\left\lbrace \left| \dga_u^m(s)\cdot \dga_u^q(s)\right| \right\rbrace \leq \max_{\substack{m,q=1,2,\ldots,n\\0\leq s\leq l_{0}^{(u)}}}\left\lbrace \left| \de_{mq}\dga_u^m(s)\cdot \dga_u^q(s)\right| \right\rbrace \leq a^{-1}_{\nu_{u}}
\end{equation*}

If follows from the last inequality, inequality (\ref{eq3.12teo}) and inequality (\ref{eq2lemm3.2}) of Lemma \ref{lemma3.2} that 
\begin{equation*}
	\left| l_{k}^{(u)}-l_{0}^{(u)} \right|\leq\frac{l_{0}^{(u)}}{k}.
\end{equation*}

Adding the last inequalities for $u=1,\ldots N$, we get
\begin{equation*}
	\left| l_{k}-l_0\right|\leq \frac{l_0}{k}. 
\end{equation*}

From this inequality, we obtain
\begin{equation}\label{eq3.13teo}
	\left| \frac{d_{k}(p,p')}{d_{0}(p,p')}-1\right| \leq\frac{1}{k}
\end{equation}

It follows from inequality (\ref{eq3.13teo}) that 
\begin{equation*}
	\lim_{k\to\infty}\text{dil}\,\iota_{k}=\lim_{k\to\infty}\text{dil}\,\iota^{-1}_{k}=1.
\end{equation*}
Therefore, we have assertion (2).

To prove assertion (3) we remember that for $\gr\in\Mr^{2,p}_{\hr}$ we have introduced $\underaccent{\bar}{K}_{\gr,\mu}(\M)$ and $\bar{K}_{\gr,\mu}(\M)$. We denote the corresponding quantities for the Riemannian metrics $\gr_{k}$, $k\in\N,$ by $\underaccent{\bar}{K}_{k,\mu}(\M)$ and $\bar{K}_{k,\mu}(\M)$, respectively. We note that since $\gr_{k}$ is an infinitely differentiable metric on $\M$ we have that $\underaccent{\bar}{K}_{k,\mu}(\M)=\underaccent{\bar}{k}_{k}(\M)$ and $\bar{K}_{k,\mu}(\M)=\bar{k}_{k}(\M)$. Considering inequalities (\ref{eq3lemm3.2})-(\ref{eq6lemm3.2}) of Lemma \ref{lemma3.2}, to prove assertion (3) it remains to note that we have:
\begin{equation*}
	\bar{K}_{0,\mu}(\M) \leq\bar{k}_{0}(\M) \text{ and }\underaccent{\bar}{K}_{0,\mu}(\M)\geq \underaccent{\bar}{k}_{0}(\M)
\end{equation*}
which follow directly from Theorem 2.1 of \cite{nikolaev1991}.

\end{proof}

\section{Equivariant Sphere Theorem and other applications}
Here we establish an algorithm using Theorem \ref{ENATeo} that extends to compact spaces with bounded curvature endowed with a compact Lie group action any result proved for compact Riemannian manifolds endowed with an action of the same group.
As a first application of this algorithm, we prove Theorem \ref{teoesf}. 

\begin{proof}[Proof of the Equivariant Sphere Theorem]
	Theorem \ref{ENATeo} guarantees that for each $\e>0$ we can find a Riemannian manifold $(\M,\gr_{\e})$ of class $C^{\infty}$ that is $d_{L}$-close  to the original metric space and whose sectional curvatures for all point $p\in\M$ and section $\pi\subset T_p\M$ satisfies
	\begin{equation*}
		\underaccent{\bar}{K}(\M)-\e\leq K_{\pi}(p)\leq 1+\e.
	\end{equation*}
Also we have that $\G$ acts by isometries on $(\M,\gr_{\e})$.

We choose $\e$ in such a way that 
\begin{equation*}
	\frac{\underaccent{\bar}{K}(\M)-\e}{1+\e}>\frac{1}{4}.
\end{equation*}
Multiplying $\gr_{\e}$ by a constant we can assume that 
\begin{equation*}
	\frac{1}{4}<c\leq K_{\pi}(p)\leq 1,
\end{equation*}
where
\begin{equation*}
	c=\frac{\underaccent{\bar}{K}(\M)-\e}{1+\e}.
\end{equation*}
Therefore, by virtue of the smooth Equivariant Sphere Theorem (see Theorem 2 of \cite{brendle_schoen2009}), we obtain the result.
\end{proof}
\begin{obss}
    If we change (\ref{eqesf}) to be 
    \begin{equation}\label{eqesf2}
        \frac{1}{4}\leq\underaccent{\bar}{K}(\M)\leq \bar{K}(\M)\leq 1,
    \end{equation}
    we can improve Nikolaev's Sphere Theorem (Theorem 3.2 of \cite{nikolaev1991}) following the same structure as before but using Theorem 1.1 of \cite{petersen_tao2009}, a classification of almost quarter-pinched manifolds. In this case, we obtain that $\M$ is diffeomorphic to a sphere or compact rank one symmetric space.
\end{obss}
 A different way to obtain a compact rank one symmetry space is with our second application, Theorem \ref{symspa}. 
\begin{proof}[Proof of Theorem \ref{symspa}]
Let $\M$ be a even dimensional compact, simply connected cohomogeneity one $\G$-space with bounded curvature such that the lower bound is greater than zero. Also assume that the $\G$ is compact.  By Theorem \ref{ENATeo} as before, we get a sequence of even dimensional compact, simply connected cohomogeneity one $\G$-manifolds with positive sectional curvature.

Using Theorem 1.1 of \cite{verdiani2004}, we obtain that $\M$ is equivariently diffeomorphic to a compact rank one symmetry space.
\end{proof}
Finally we prove the last application.
\begin{proof}[Proof of Theorem \ref{cohomogteo}]
    Any compact cohomogeneity one space with bounded curvature can be approximated by a sequence of compact cohomogeneity on Riemannian manifolds using Theorem \ref{ENATeo} with the curvature bounds are preserved. Furthermore, all the elements of the sequence have the same topology and they are compact cohomogeneity one manifolds with positive sectional curvature. Since a lower sectional curvature bound implies a lower Ricci curvature bound, we can apply Theorem A of \cite{grove_ziller2002} and obtaining the result.
\end{proof}

\begin{obss}[Homogeneous spaces]
The equivariant smoothing procedure of Theorem \ref{ENATeo} also applies in the homogeneous setting. If $(\M,d)$ is a compact, simply connected space with bounded curvature admitting a transitive isometric action of a compact Lie group $\G$, so that $\M \cong \G/\Hr$, then Theorem \ref{ENATeo} produces smooth $\G$--invariant Riemannian metrics $\textbf{\gr}_k \in \Mr^{\infty}_{\hr}(\M)$ whose induced distances converge to $d$ in the Lipschitz sense while preserving the two--sided curvature bounds. In particular, if $(\M,d)$ has a positive lower curvature bound, then for $k$ large the manifolds $(\M,\textbf{\gr}_k)$ are simply connected compact homogeneous spaces with positive sectional curvature. Hence the classification results of Wallach in \cite{wallach1972} and B\'erard--Bergery in \cite{berard-bergery1976} apply and restrict the possible homogeneous spaces $\G/\Hr$ arising in the bounded--curvature context.
\end{obss}

\bibliographystyle{plain} 
\bibliography{biblio}

\end{document}